\newtheorem{theorem}{Theorem}[section]
\newtheorem{corollary}[theorem]{Corollary}
\newtheorem{definition}[theorem]{Definition}
\newtheorem{example}[theorem]{Example}
\newcommand{\eps}{\ensuremath{\epsilon}}
\newcommand{\NP}{{\sffamily\textbf{NP}}}
\definecolor{mediumcandyapplered}{rgb}{0.89, 0.02, 0.17}
\definecolor{grayishblue}{rgb}{0.93, 0.93, 0.95}
\definecolor{navyblue}{rgb}{0.0, 0.0, 0.5}
\DeclarePairedDelimiter\abs{\lvert}{\rvert}
\newcommand{\X}{\ensuremath{X}}
\newcommand{\YN}{\ensuremath{Y_{N}}}
\newcommand{\R}{\mathbb{R}}
\newcommand{\Z}{\mathbb{Z}}
\newcolumntype{C}[1]{>{\centering\arraybackslash }b{#1}}
\tikzset{
  candidat/.style={rectangle, inner sep=0pt, minimum size=0.1cm, draw=gray, fill=gray},
  nds/.style={circle, inner sep=0pt, minimum size=0.12cm, draw=black, fill=black},
  ndns/.style={rectangle, inner sep=0pt, minimum size=0.12cm, draw=black, fill=black},
  test/.style={circle, inner sep=0pt, minimum size=0.12cm, draw=black, fill=black},
}
\pgfplotsset{compat=1.8}
\begin{document}

\title{A Multi-objective Perspective on the Cable-Trench Problem}

\author{Lara L\"ohken}
\author{Michael Stiglmayr}
\affil{University of Wuppertal, School of Mathematics and Natural Sciences, IMACM, Gaußstr.~20, 42119 Wuppertal, Germany\\
\{loehken,stiglmayr\}@uni-wuppertal.de
}

\date{}
\maketitle

\begin{abstract}
  The cable-trench problem is defined as a linear combination of the shortest path and the minimum spanning tree problem. In particular, the goal is to find a spanning tree that simultaneously minimizes its total length and the total path length from a pre-defined root to all other vertices. Both, the minimum spanning tree and the shortest path problem are known to be  efficiently solvable. However, a linear combination of these two objectives results in a highly complex problem.

  In this article we consider bi-objective cable-trench problem which separating the two cost functions. We show that in general the bi-objective formulation has additional compromise solutions compared to the cable-trench problem in its original formulation. In order to determine the set of non-dominated points and efficient solutions, we use $\eps$-constraint scalarizations in combination with a problem specific cutting plane. Moreover, we present numerical results on different types of graphs analyzing the impact of density and cost structure on the cardinality of the non-dominated set and the solution time.
 
 \bigskip
 \emph{Keywords: multi-objective optimization, combinatorial optimization, minimum spanning tree, shortest paths, network flow problem}
\end{abstract}

\section{Problem Formulation} \label{sec:ProblemFormulation}
\subsection{Cable-Trench Problem}

Both, the shortest path problem and the minimum spanning tree problem represent widely studied problems in graph theory that are known to be efficiently solvable since the mid-20th century. Although integer programming problems generally form a complex class of problems in mathematical programming, combinatorial problems such as the shortest path problem and the minimum spanning tree problem benefit from their underlying structure in the respective solution process. A comprehensive introduction to integer and combinatorial programming can be found in, e.g.,  \cite{Wols1998}, \cite{NemWols1988} and \cite{korte18}. For graph theory and network optimization in particular see, e.g., \cite{deo74}, \cite{GrossYellen05} and \cite{ahuja93}. There exist various approaches to efficiently determine a minimum spanning tree for a connected, edge-weighted undirected graph. Classic algorithms are, for example, the algorithm of Prim \citep{Prim_1957} or the algorithm of Kruskal \citep{Kruskal_1956}.  The problem of finding shortest paths from a source vertex to all other vertices in a connected, undirected graph with non-negative edge weights, also known as single-source shortest path problem (SSSP), can be efficiently solved by, for example, the algorithm of Dijkstra \citep{Dijkstra_1959}. However, combining the minimum spannning tree problem and the single-source shortest path problem results in a computationally demanding problem as we will discuss later in this work in more detail.

There exist several publications investigating the interrelation of minimum spanning tree and shortest path problem. A linear algorithm for an edge cost sensitivity analysis for both problems is presented in \cite{Booth_1994}. In \cite{Vasko}, it is pointed out that in general the minimum spanning tree does not correspond to a tree of shortest paths w.r.t.\ any root node in a given graph. \cite{Khuller_1995} discuss the trade-off between the total weight of a spanning tree and the distance in the tree between a given root vertex to any other vertex. \cite{Eppstein} shows that the minimum spanning tree that minimizes the path length between a particular set of vertices is \NP-complete. Finally, \cite{Vasko} introduced the \emph{cable-trench problem} (CTP) which combines shortest path problem and minimum spanning tree problem in a single-objective optimization problem.

\begin{definition}[CTP, cf.~\citealt{Vasko}] \label{VaskoDefCTP}
	For a connected graph $G = (V,E)$ with specified root vertex  $v_0 \in V$, let $c(e) \geq 0$ be the cost of each $e \in E$. Furthermore, let $\tau \geq 0$ and $\gamma \geq 0$ be fixed positive parameters. The solution of the cable-trench problem (CTP) is the spanning tree $T$  that minimizes $ \tau \, c_{\tau}(T) + \gamma \, c_{\gamma}(T)$ where $c_{\tau}(T)=\sum_{e\in T} c(e)$ is the total cost of the spanning tree $T$ and $ c_{\gamma}(T)=\sum_{v\in V}\sum_{e\in P(v)} c(e)$ is the total path cost in $T$ from $v_0$ to all other vertices of $V$. Thereby, \(P(v)\) denotes the (unique) path from \(v_0\) to \(v\) in \(T\).
\end{definition}

The name-giving application of the cable-trench problem is the task of creating a campus network where each building on the campus is connected to a central server $v_0$ via its own dedicated cable. The goal is to realize this at minimum cost. Digging trenches between all buildings, in which the cables can be laid is a classical application of the minimum spanning tree problem. Whereas the design of a network such that the total cable length is minimized can be formulated as a single source shortest path problem. Note that these two optimization models do not necessarily lead to the same optimal solutions.

It can be easily seen from Definition \ref{VaskoDefCTP} that for $\gamma > 0$ and $ \tau = 0$ an optimal solution of the CTP is a shortest path tree (SPT) of $G$ with root vertex $v_0$. Conversely, for $\tau >0$ and $\gamma = 0$, an optimal solution of the CTP is a minimum spanning tree of $G$. As a result, the solutions for the two marginal cases can be determined in polynomial time whereas the CTP will be shown to be highly computationally demanding in general.

Following on from this, solutions of the CTP can be categorized with respect to the ratio $R = \tau / \gamma$, including the two limiting cases $R \rightarrow 0$ for $\tau = 0$ and $R \rightarrow \infty$ for $\gamma = 0$. \cite{Vasko} show that, if minimum spanning tree and shortest path tree differ from each other, then the solutions of the CTP, more precisely optimal solutions for any non-negative values of $\tau$ and $\gamma$, can be described by a sequence of spanning trees as the ratio $R$ increases. 

To solve the problem for all weight ratios $R$, \cite{Vasko} propose a heuristic, based on a neighborhood search procedure, that tends to generate either optimal or nearby optimal solutions very quickly. Furthermore, \cite{Vasko} show how a heuristic, not necessarily optimal solution combined with optimal solutions for a few problem-specific ratio values $R$ can be used to generate tight lower bounds for the CTP and moreover optimal solutions for all weight ratios $R$.

Motivated by a the healthcare application of vasculature reconstruction from (micro) computer tomographic scans, \cite{Vasko2015}\ define a more generalized version of the CTP in 2015, the generalized cable-trench problem (GCTP). See \cite{Jiang}\ for a general description of vasculature reconstruction. \cite{Vasko2015}\ introduce a setting of the CTP in which each edge of the graph is assigned two in general different cost coefficients for the two components of the objective function, namely the spanning tree objective and the shortest path objective from the root node. In terms of the application of creating a campus network, the cost of laying a cable and the cost of digging a trench between a pair of nodes generally differs. More formally, the edge costs can be written as $c_{e} = (t_{e}, s_{e})$ and a solution of the GCTP is a spanning tree $T$ that minimizes $ \tau \, t_{\tau}(T) + \gamma \, s_{\gamma}(T) $, where analogously $t_{\tau}(T)$ matches the total spanning tree cost and $s_{\gamma}(T)$ the total path cost in $T$. Thus, a general motivation for the GCTP lies in the idea that the path cost of an edge can differ from the edge cost for a spanning tree. Note that, if both costs of each edge are proportional to each other, then the GCTP reduces to the CTP. \cite{Vasko2015} develop two highly efficient metaheuristics that are capable of finding near-optimal solutions to GCTPs in quadratic time in the number of vertices of the graph. With regard to this application, large scale instances with up to $25000$ vertices are of particular interest.

Several other extensions of the cable-trench problem are introduced and investigated in the literature. \cite{Marianov_2012} address the $p$-cable-trench problem ($p$CTP), for which not only one but $p$ source vertices exist. They present an integer programming formulation of the problem as well as Lagrangean relaxation-based heuristics in order to solve large scale instances that require high computation time if solved exactly. In order to improve the solution process even more, \cite{Schwarze2016} propose a metaheuristic for the $p$CTP. Further extensions of the $p$CTP are discussed in \cite{Marianov2015} and \cite{Calik_2017}, in which a variant of the $p$CTP with covering is formulated and considered. \cite{Calik_2017} furthermore includes capacities and present an algorithmic framework based on Benders decomposition in order to compute solutions for the capacitated $p$CTP with covering which can also be applied to most other variants of the CTP. By introducing the capacitated $p$CTP with facility cost, \cite{Schwarze_2020} also takes into account capacity in terms of customer limitations associated with a facility and, in addition, the cost of opening a facility. 

There exist several other extensions of the CTP. The multi-commodity cable-trench problem introduced by \cite{Schwarze2015TheMC} sets up a formulation of the problem including different cable types from different operators. The goal is for operators to reduce their individual costs by using the same trenches. \cite{Landquist2018} define the generalized steiner cable-trench problem in which not every vertex of the graph has to be connected to $v_0$ and which offers the possibility of in general different edge costs. Like the GCTP, this problem is applied to a healthcare application as well, namely to error correction in vascular image analysis.

While we will discuss the complexitiy of the CTP and associated results by \cite{Eppstein} and \cite{Vasko} in more detail later in this work, findings regarding the complexity on the approximability of the problem can be found in \cite{Benedito2021,BENEDITO2023}. More precisely, \citeauthor{Benedito2021}\ show that the approximation with factor of at most 1.000475 is \NP-hard. In \cite{davis2023classifying} tractable special cases of the cable-trench problem are considered and classified according to the underlying graph structure.

In this work, we will focus on the cable-trench problem in its original version and provide a new perspective on its objective function in terms of multi-objective optimization. This approach can be directly applied to the GCTP and furthermore extended to other versions of the CTP. Therefore, we start in Section \ref{Section Bi-Objective CTP} by giving a brief introduction into multi-objective opti,mization in general, followed by a formulation for the bi-objective cable-trench problem. In Section \ref{sec:TheoreticalResults}, we show some theoretical results on the bi-objective cable-trench problem and its computational effort. Moreover, we discuss differences between the single and bi-objective perspective. After that, we present a solution strategy and adaptations in order to speed up the solution process, followed by numerical results.

\subsection{Bi-Objective Cable-Trench Problem} \label{Section Bi-Objective CTP}

The cable-trench problem as defined in \cite{Vasko} minimizes two cost functions, namely $c_{\tau}(T)$ and $c_{\gamma}(T)$, respectively $t_{\tau}(T)$ and $s_{\gamma}(T)$ for its generalized version. Thereby, the objective function consists of a linear combination with weights $\tau$ and $\gamma$. The relative importance of both cost functions is then represented by the ratio $R = \tau / \gamma$.

Instead of using a weighted linear combination of both cost functions, we consider both cost functions as two seperate objectives that are minimized seperately. In doing so, we change the perspective from single to multi-objective. Therefore, we first give a brief introduction to multi-objective optimization in general. 

In many practical applications, not only one but several, in general conflicting objectives are relevant to evaluate the quality of a feasible solution. In other words, there are several objective functions to be optimized. Multi-criteria decision making and multi-objective optimization, in particular, offer the possibility to optimize w.r.t.\  two or more objectives simultaneously. A comprehensive introduction to multi-objective optimization can be found, e.g., in \cite{Ehr2005}. In the following, we will focus on multi-objective combinatorial optimization (MOCO). In combinatorial optimization, feasible solutions correspond to a subset of elements of a finite ground set, e.g., edges of a graph. For an introduction to and survey of multi-objective combinatorial optimization see also \cite{EhrGand2000}. Since any combinatorial problem can be formulated as an integer program, we start with a definition of a general multi-objective integer linear program.

\begin{definition}
	A \emph{multi-objective integer linear program} can be written as 
	\begin{align} \label{MOILP}
		\min \big\{ \left( f_{1}(x), \dots, f_{p}(x) \right) = Cx : x \in X \big\}
		\tag{MOILP}
	\end{align}
	where $p \geqq 2$ and $C \in \R^{p \times n}$. $X$ denotes the set of feasible solutions of the problem and is defined by $X : = \{ x \in \Z^{n}: Ax \leqq b, x \geqq 0\} $ with $A \in \R^{m \times n}$, $b \in \R^{m}$ and $C \in \R^{p \times n}$. The feasible outcome set $Y$ is defined by $Y:= \{ Cx: x \in X \}$
\end{definition}

When considering multiple objective functions, we cannot expect to find a solution which optimizes all objective functions simultaneously. Instead of considering the preferences of a potential decision maker, we follow an a-posteriori approach using the Pareto concept of optimality and determinig the whole set of so-called non-dominated points. Pareto optimality relies on componentwise orders in $\R^{p}$. We write $y^{1} \leqq y^{2}$, if $y_{k}^{1} \leqq y_{k}^{2}$ for $k = 1 \dots p$, $y^{1} \leqslant y^{2}$, if $y^{1} \leqq y^{2}$ and $y^{1} \neq y^{2}$, and $y^{1} < y^{2}$, if $y_{k}^{1} < y_{k}^{2}$ for $k = 1 \dots p$. Accordingly, the set $\R^{p}_{\geqq} := \{x \in \R^{p}: x \geqq 0 \}$ and analogously $\R^{p}_{\geq}$ and $\R^{p}_{>}$ are defined.

\begin{definition}
	A feasible solution $x^{\ast} \in X$ is called an \emph{efficient (weakly efficient)} solution of \eqref{MOILP}, if there does not exist a feasible solution $x \in X$ such that $f(x) \leqslant f(x^{\ast})$ $( f(x) < f(x^{\ast}))$. If $x^{\ast}$ is (weakly) efficient, then $f(x^{\ast})$ is called \emph{(weakly) non-dominated}. If $f(x) \leqslant f(x')$ holds for $x, x' \in X$, then $x$ \emph{dominates} the point $x'$ and $f(x)$ dominates the point $f(x')$. The set $X_{E} \subseteq X$ denotes the set of all efficient solutions and the set $Y_{N} = \{ y \in \R^p : \exists \text{ } x \in \X_E : f(x) = y \}$ analogous the set of all non-dominated points, also called the non-dominated or Pareto optimal set. 
\end{definition}

Note that, the non-dominated set is componentwise bounded by the ideal point $y^{I}$ and the Nadir point $y^{N}$ that are defined by $y^{I}_{k} := \min_{y \in Y} y_{k}$ and $y^{N}_{k} := \max_{y \in Y_{N}} y_{k}$ for $k = 1 \dots p$. 

We distinguish the following classes of efficient solutions.

\begin{enumerate}
	\item \emph{Supported} efficient solutions are efficient solutions which are optimal for an associated weighted-sum scalarization 
	\begin{align} \label{weightedsumproblem} 
		\min \big\{ \lambda_{1}f_{1}(x) + \dots + \lambda_{p}f_{p}(x) :  x \in X \big\} \tag{$\text{P}_\lambda$}
	\end{align}
	
	for some weight vector $\lambda \in \R^{p}_{>}$. The corresponding images in the objective space are called supported non-dominated points. By $X_{SE}$ we denote the set of all supported efficient solutions and by $Y_{SN}$ the set of all supported non-dominated points. Note that all supported non-dominated points are located on the boundary of the convex hull of $Y$, i.e. they are non-dominated points of $(\text{conv } Y) + \R^{p}_{\geqq}$. 
	
	Depending on where they are located on $(\text{conv } Y) + \R^{p}_{\geqq}$, they can be further distinguished.
	\begin{enumerate}
		\item Supported efficient solutions $x \in X_{SE}$ for which the objective vector $f(x)$ is an extremepoint of $(\text{conv } Y) + \R^{p}_{\geqq}$ are called \emph{extreme supported efficient solutions}, $X_{SE1}$, and \emph{non-dominated extreme points}, $Y_{SN1}$, respectively.
		\item Supported efficient solutions $x \in X_{SE}$  that are located in the relative interior of an efficient face of $\text{conv } Y$ belong to the set of \emph{non-extreme supported efficient solutions}. For such a solution $x$ there exist supported extreme efficient solutions $x^{1}, \dots, x^{p}$ such that $f(x^{i}) \neq f(x^{j})$ all $i, j \in \{1, \dots, p\}$ with $i \neq j$, and $\tilde{\upalpha} \in \{ \upalpha \in \R^p_{\geq} : 0 \leq \upalpha_{i} < 1 \text{ } \forall \text{ } i = 1, \dots , p, \quad \sum_{i=1}^{p} \upalpha_{i} = 1	\} $ with $f(x) = \sum_{i = 1}^{p} \tilde{\upalpha}_{i} f(x^{i})$. Those solutions and their corresponding outcome vectors are denoted by $X_{SE2}$ and $Y_{SN2}$, respectively.
	\end{enumerate}
	
	\item \emph{Nonsupported} efficient solutions are efficient solutions that are not optimal solutions of  a weighted-sum scalarization $P_{\lambda}$ for any $\lambda \in \R^{p}_{>}$. 
	
\end{enumerate}

Multi-objective programs with $p = 2$ are called bi-objective problems (BOP). Accordingly, MO is replaced by BO in all multi-objective problem class acronyms dealing with two objectives. 
There are several types of algorithms to determine non-dominated points for multi-objective programs. We thereby aim at computing a minimal complete set, i.e., the set of all non-dominated points and, for each of them, a corresponding efficient solution.
In this sense, we present a bi-objective formulation of the cable-trench problem, in which both the total path cost and cost of the spanning tree are minimized separately.

\begin{definition}
	For a connected graph $G = (V,E)$ with specified vertex  $v_0 \in V$, let $c_e = c_{ij} \geqslant 0 $ for all $ e = [i,j] \in E$ be edge cost coefficients. The cable-trench problem is written as the bi-objective optimization problem
	\begin{align} \label{BCTP}
		\min &  \text{ } c_{\gamma}(T) \notag \\
		\min & \text{ } c_{\tau}(T) \tag{BCTP} \\
		\text{s.t.} & \text{ } T \in \mathcal{T} \notag
	\end{align}
	where $\mathcal{T}$ denotes the set of all spanning trees of G, $ c_{\gamma}(T)$ the total path cost in $T$ of all paths from $v_0$ to all other vertices of $V$ and $c_{\tau}(T)$ the total cost of the spanning tree $T$. \newline
	
	Furthermore, let $\widetilde{c}_{e}$ be edge cost coefficients given by $\widetilde{c}_{e} = (s_{e},t_{e}) = (s_{ij}, t_{ij})$ with $s_{ij}, t_{ij} \geqslant 0 $ for all $e = [i,j] \in E$. Then the generalized CTP can be formulated accordingly as the bi-objective problem
	
	\begin{align} \label{BGCTP}
		\min &  \text{ } s_{\gamma}(T) \notag \\
		\min & \text{ } t_{\tau}(T) \tag{BGCTP} \\
		\text{s.t.} & \text{ } T \in \mathcal{T} \notag
	\end{align}
	where $s_{\gamma}(T)$ denotes the total path cost, determined by edge costs $s_{e}$, and $t_{\tau}(T)$ the total cost of the spanning tree $T$, determined by the cost coefficients $t_{e}$.
\end{definition}

This bi-criteria formulation directly illustrates that the BCTP is a combination of the minimum spanning tree and the shortest path problem, as both objective functions correspond to exactly one of the two problems. Furthermore, the single-objective and the bi-objective formulation are related to each other, if we consider the CTP from a multi-objective point of view, since the formulation of \citeauthor{Vasko}\ resembles a weighted-sum scalarization of both cost functions. In the next section, we will examine this in more detail comparing both formulations.

\section{Theoretical Results} \label{sec:TheoreticalResults}
\subsection{Differences Between Single- And Multi-Objective Perspective}

Considering the (generalized) cable-trench problem from a bi-objective perspective not only offers a new interpretation of the problem itself, but also gives rise to new algorithmic approaches. Moreover, we have an idea of how both formulations are related and can be embedded in each other. Thereby, the question arises if the two formulations are equivalent in the sense that both problems lead to the same set of solutions when varying $\tau, \gamma \geqslant 0$.

Since the CTP can be interpreted as a weighted sum scalarization of the BCTP, it follows from theory of multi-objective optimization that any optimal solution of the CTP for $\tau, \gamma \geqslant 0$ is also an efficient solution of the BCTP, respectively for the GCTP and BGCTP. Namely, the respective weighted sum problem yields the set of supported non-dominated points $Y_{SN}$ in the solution space. Thus, by solving the BCTP, we can determine all solutions of the CTP as well. The question remains whether non-supported non-dominated points may exist that can not be found via a weighted sum scalarization. This would imply that there are nonsupported efficient solutions of BCTP that can not be obtained by solving the original CTP formulation.

\begin{figure}
	\begin{minipage}[b]{0.45\textwidth}
		\begin{center}
			\begin{tikzpicture}[scale=0.85,auto,>=stealth',shorten >=1pt,auto,node distance=1cm and 1cm, main node/.style={circle,draw=black,font=\footnotesize}, text=black, small node/.style={circle,draw=black,font=\footnotesize}]
				
				\node[main node] (4) at (0,0) {4};
				\node[main node] [above  = of 4] (1) {$S$};
				\node[main node]  [right = of 1] (2) {2};
				\node[main node]  [below = of 2] (3) {3};
				\node (graph) at (1,0.5) {\small $G$};
				
				\node[main node] (5) at (3.0,0) {4};
				\node[main node] [above  = of 5] (6) {$S$};
				\node[main node]  [right = of 6] (7) {2};
				\node[main node]  [below = of 7] (8) {3};
				\node (t1) at (4,.5) {\small$T_1$};
				
				\node[main node] (9) at (0,-3.0) {4};
				\node[main node] [above  = of 9] (10) {$S$};
				\node[main node]  [right = of 10] (11) {2};
				\node[main node]  [below = of 11] (12) {3};
				\node (t2) at (1,-2.5) {\small $T_2$};

				\node[main node] (13) at (3.0,-3.0) {4};
				\node[main node] [above  = of 13] (14) {$S$};
				\node[main node]  [right = of 14] (15) {2};
				\node[main node]  [below = of 15] (16) {3};
				\node (t3) at (4,-2.5) {\small $T_3$};

				\path[every node/.style={font=\footnotesize}]
				(1) edge [color = black] node [above] {5} (2)
				(1) edge [color = black] node [left] {10} (4) 
				
				(2) edge [color = black] node [right] {6} (3) 
				
				(3) edge [color = black] node [below] {4} (4)

				(6) edge [color = black] node [above] {5} (7)
				(6) edge [color = black] node [left] {10} (5) 
				
				(7) edge [color = black] node [right] {6} (8)

				(10) edge [color = black] node [above] {5} (11)
				(10) edge [color = black] node [left] {10} (9)

				(12) edge [color = black] node [below] {4} (9)

				(14) edge [color = black] node [above] {5} (15)
				
				(15) edge [color = black] node [right] {6} (16) 
				
				(16) edge [color = black] node [below] {4} (13)
				
				;
			\end{tikzpicture}  
		\end{center}
		\caption{Graph \(G\) and its spanning trees $T_1,T_2,T_3$ (see~Example~\ref{noncvx_example})}
		\label{noncv_CTP_example_graph_Fig}
	\end{minipage}
	\hfill
	\begin{minipage}[b]{0.48\textwidth}	
		\begin{center}
			\begin{tikzpicture}[scale=0.45]				
				\draw[line width=0.8,->,black!75](24,13) -- (24,24) node[above] {$f_1 = c_{\gamma}$};
				\draw[line width=0.8,->,black!75] (24,13) -- (35,13) node[above left] {$f_2 = c_{\tau}$};
				\draw[help lines,color=black!75, xstep=1cm] (25,12.9) grid (34,13.1);
				\draw[help lines,color=black!75, ystep=1cm] (23.9,14) grid (24.1,23);
				
				\foreach \x in {25,30,35}
				\node[color=black!75] at (\x,12.2) {\x};
				
				\foreach \y in {15,20}
				\node[color=black!75] at (23.2,\y) {\y};
				
				\fill [fill=grayishblue] (26,24) -- (26,21) -- (31,15) -- (35,15) decorate [decoration=random steps, segment length=3pt] {|- cycle} ; 
				\filldraw (26,21) circle (3pt) node [right] {\color{black!75} \small $f(T_1)$};
				\filldraw (29,19) circle (3pt) node [right] {\color{black!75} \small $f(T_2)$};
				\filldraw (31,15) circle (3pt) node [above right] {\color{black!75} \small $f(T_3)$};
				
				\draw[dashdotted, thick, color = mediumcandyapplered] (26,24.2) -- (26,21) -- (31,15)-- (35.2,15);
				\node[right, color = black!75] (Y+R) at (27.5,22.5) {\small $\text{conv } (Y) + \R^{2}_{\geqq}$};
			\end{tikzpicture}
		\end{center}
		\caption{Non-dominated set \(Y_N\) in outcome space (see Example~\ref{noncvx_example})}
		\label{nondomSet_noncvx_exampleFig}
	\end{minipage}
\end{figure}

\begin{example} \label{noncvx_example}
	Consider the graph given in Figure~\ref{noncv_CTP_example_graph_Fig} with root node $S$. The set of non-dominated points of the corresponding bi-objective cable-trench problem is $Y_{N} = \{ (26,21)^{\top}, (29,19)^{\top}, (31,15)^{\top}\}$, see Figure~\ref{nondomSet_noncvx_exampleFig}. Note that $(29,19)^{\top}$ is an unsupported non-dominated point, which can be recognized by the fact that it is not located on the boundary of the convex hull of $Y + \R^{2}_{\geqq}$. Recall that only points located on the convex hull of $Y + \R^{2}_{\geqq}$, namely the points in $Y_{SN}$, can be determined via a weighted sum scalarization. This shows that, in general, solving the BCTP may yield additional solutions that can not be obtained by solving the corresponding CTP for any values of the weights \(\tau,\gamma\geq 0\).
\end{example}

In fact, these additional unsupported solutions can be substantial. For this, let us take a closer look at Example \ref{noncvx_example}. Solving the CTP yields as optimal solutions the minimum spanning tree or the shortest path tree depending on the choice of $\tau, \gamma \geqslant 0$. If, alternatively, we solve the BCTP, we obtain an additional compromise solution that lies between the two extreme cases. From the point of view of a decision maker this compromise solution could be of considerable interest.

Consequently, the multi-objective view not only allows a new interpretation of the (generalized) cable-trench problem and its solutions from a theoretical point of view, but also has practical implications.

\subsection{Computational Hardness}
In this section, we will study the computational hardness of solving the CTP and GCTP. While \cite{Vasko} already explored the complexity of the CTP in its originally formulation, we will extend these investigations to the bi-objective case. We will examine two different characteristics, the complexity and intracatability of the problem. Both of which provide insights into how computationally hard the problem can be. A self-contained introduction to complexity theory and intractability is given, for example, in \cite{Garey1979} and \cite{Ehr2005}.

First, by examining the theoretical results presented by \cite{Vasko}, we analyse if the decision problem of whether efficient solutions generally exist is polynomially solvable. Subsequently, we investigate whether the number of non-dominated points is polynomially bounded.

\subsubsection{Complexity of the (Generalized) Cable-Trench Problem}
Recall the two objective functions of the bi-objective (generalized) cable-trench problem, namely the total edge costs and total path lengths from the predefined root node to all other vertices. Both refer to well known efficiently solvable single-objective optimization problems, one being the minimum spanning tree problem and the other the (single-source) shortest path problem. Nevertheless, as we will see, the bi-objective combination and the weighted sum of both problems will be computationally hard.

First, we take a closer look at the following two subproblems: finding a shortest path tree such that the total edge costs are minimized and finding a minimum spanning tree that minimizes the path cost from a starting node to all other vertices in the graph. From a bi-objective perspective, each of these subproblems refer to determining the respective lexicographic minima of both objectives, see, e.g., \cite{Ehr2005}.

\begin{theorem}[\citealt{Vasko}] \label{lexikoOptCTP1}
	The problem of finding a spanning tree that is a shortest path solution from $v_{0}$ to all other vertices in $V$ such that the total edge costs are minimized is polynomially solvable.
\end{theorem}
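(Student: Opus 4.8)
The plan is to read the statement as a lexicographic minimization in which the path objective $c_{\gamma}$ is primary and the edge objective $c_{\tau}$ is secondary, and to dispatch the two stages in turn. For the first stage I would use that the path cost splits vertexwise, $c_{\gamma}(T) = \sum_{v \in V} \sum_{e \in P(v)} c(e) = \sum_{v \in V} \mathrm{dist}_{T}(v_{0},v)$, where $\mathrm{dist}_{T}(v_{0},v)$ is the length of the unique $v_{0}$--$v$ path in $T$. Writing $d(v)$ for the genuine shortest-path distance from $v_{0}$ to $v$ in $G$, each term obeys $\mathrm{dist}_{T}(v_{0},v) \ge d(v)$, so $c_{\gamma}(T) \ge \sum_{v \in V} d(v)$ with equality exactly when every tree path is a shortest path, that is, when $T$ is a shortest path tree. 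As Dijkstra's algorithm simultaneously returns all distances $d(v)$ and one such tree in time $\bigO{|E| + |V| \log |V|}$, the set of $c_{\gamma}$-minimizers is precisely the nonempty set of shortest path trees.

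The second stage minimizes $c_{\tau}$ over that set, and here I would exploit the tight-edge digraph $D$ formed by all arcs $(u,v)$ with $d(u) + c([u,v]) = d(v)$. A spanning tree is a shortest path tree if and only if it picks, for every $v \neq v_{0}$, its parent along an in-arc of $D$. Since each tree edge is the parent edge of exactly one endpoint, the edge objective splits in the same fashion, $c_{\tau}(T) = \sum_{v \neq v_{0}} c(\mathrm{par}(v))$. The selections being independent across vertices, I would choose for each $v \neq v_{0}$ a cheapest tight in-arc, i.e.\ one minimizing $c([u,v])$ over $\{\, u : (u,v) \in D \,\}$; the sum of these minima is the minimum of $c_{\tau}$ over all shortest path trees. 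This is a single scan over $E$ after the Dijkstra call, so the overall procedure is polynomial.

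The step I expect to demand the most care is confirming that this greedy, vertexwise selection really yields a spanning tree and not merely an edge set of cardinality $|V|-1$. When all costs are strictly positive it is immediate: a tight arc strictly increases $d$, so following parent pointers strictly decreases the distance label and must terminate at the unique vertex $v_{0}$ of label $0$; hence the chosen arcs form a rooted arborescence, acyclic and with $v_{0}$ reaching every vertex. The delicate case is zero-cost edges, for which a tight arc may join two \emph{equidistant} vertices in both directions, so an unconstrained choice could close a cycle among vertices of equal label. I would resolve this by contracting each connected component of zero-cost tight edges into a supernode: in the contracted digraph every remaining tight arc has positive cost and strictly increases the label, the positive-weight argument then produces a minimum-cost arborescence on the supernodes, and re-expanding each supernode along its internal zero-cost edges contributes nothing to $c_{\tau}$. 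The reconstructed tree is therefore a shortest path tree of minimum total edge cost, and the entire construction stays polynomial.
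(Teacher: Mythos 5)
Your proof is correct, and it shares its skeleton with the paper's argument (which the paper attributes to Eppstein via \citealt{Vasko}): both compute the shortest-path distances $d(v)$ from $v_0$, both form the digraph of tight arcs $(u,w)$ with $d(u)+c(u,w)=d(w)$, and both observe that the shortest path trees are exactly the spanning arborescences of this digraph rooted at $v_0$. Where you diverge is the final optimization step. The paper simply calls a minimum spanning arborescence algorithm (Chu--Liu/Edmonds) on the tight-arc digraph as a black box; this is heavier machinery but requires no case analysis and immediately certifies polynomiality. You instead exploit the fact that the tight-arc digraph is layered by the distance labels, so that choosing the cheapest tight in-arc independently at each vertex $v\neq v_0$ already yields an arborescence --- and you correctly identify that the only obstruction to this greedy decomposition is the possibility of tight two-cycles among equidistant vertices joined by zero-cost edges, which you dispose of by contracting zero-cost tight components (these contribute nothing to $c_\tau$ on re-expansion). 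What your route buys is elementarity and speed: after one Dijkstra call in $\mathcal{O}(|E|+|V|\log|V|)$ time the second stage is a single scan over $E$, and the proof makes transparent \emph{why} the problem is easy, namely that the lexicographic second stage decomposes vertexwise over an acyclic structure. What the paper's route buys is brevity and robustness: the arborescence reduction needs no special treatment of zero-cost edges and generalizes verbatim to the GCTP setting where the two cost functions differ, whereas your vertexwise decomposition of $c_\tau$ via parent edges also carries over but the reader must re-check the contraction step with the second cost function in place of the first.
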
 
\begin{proof}
	\cite{Vasko} presents the following proof by Eppstein. Determine shortest paths from $v_0$ to each vertex of $V \setminus \{v_0\}$. Find the set of all directed edges that can be part of a shortest path, i.e., an edge $(u, w)$ is in this set if 
	$c(u,w) = \sum_{e \in P_w} c(e) - \sum_{e \in P_u} c(e)$, where $P_w$ and $P_u$ denote paths from $v_0$ to $w$ and $u$, respectively.
	Find a minimum spanning arborescence of this directed graph. All the above steps can be carried out in polynomial time. 
\end{proof}

Theorem \ref{lexikoOptCTP1} states that finding the lexicographically optimal solution for the GCTP w.r.t.\ first minimizing $s_{\gamma}(T)$ and among those optimal solutions minimizing $t_{\tau}(T)$, which can be denoted as $\text{lexmin}_{T \in \mathcal{T}}(s_{\gamma}(T), t_{\tau}(T))$, can be efficiently accomplished. The same holds for the CTP with $s_{\gamma}(T) = c_{\gamma}(T)$ and $ t_{\tau}(T)= c_{\tau}(T)$. As any lexicographically optimal solution is also a (supported) efficient solution \citep{Ehr2005}, we directly see that one non-dominated point can be determined in polynomial time. Accordingly, the decision problem of whether an efficient solution exists is also polynomially solvable. Furthermore, in the special case that there exists a feasible solution for the (generalized) CTP which minimizes both objective functions simultaneously, it it can be computed in polynomial time. From now on we assume that the ideal point is infeasible. 

Following up on this, let us take a look at the lexicographic optimization problem with respect to the reversed order of the objective functions, namely the problem of finding a minimum spanning tree that minimizes the total path cost. Since our assumption holds, this will lead to different solutions than those from the problem stated in Theorem \ref{lexikoOptCTP1}. 

\begin{theorem}[\citealt{Vasko}] \label{lexikoOptCTP2} 
	The problem of finding the minimum spanning tree that minimizes the total path costs from a starting node $v_{0}$ to all other vertices $v_i \in V\setminus\{v_0\}$ is \NP-complete.
\end{theorem}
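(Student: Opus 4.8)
The plan is to prove the statement by showing that the associated decision problem lies in \NP\ and is \NP-hard, hence \NP-complete. Here the decision version asks: given \(G\), the root \(v_0\), edge costs \(c\), and a bound \(K\), does there exist a spanning tree \(T\) whose total edge cost \(c_\tau(T)\) equals the minimum spanning tree weight \(M^\ast := \min_{T'\in\mathcal{T}} c_\tau(T')\) and whose total path cost satisfies \(c_\gamma(T)\le K\)? Membership in \NP\ is the easy half. First I would compute \(M^\ast\) in polynomial time (e.g.\ by Prim or Kruskal). A certificate is then simply the tree \(T\) itself: given \(T\) one checks in polynomial time that it is a spanning tree, that \(c_\tau(T)=M^\ast\), and---after a single traversal from \(v_0\)---that \(c_\gamma(T)\le K\). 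Thus the problem is in \NP.

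For hardness I would exhibit a polynomial reduction from a classical \NP-complete problem; a natural source, given that the tension lies in \emph{which} edges may be included, is \textsc{Exact Cover by 3-Sets} \citep{Garey1979} (alternatively, one may try to adapt the result of \cite{Eppstein} cited above, which establishes \NP-completeness of minimizing path length over a distinguished set of terminals among minimum spanning trees). Given an X3C instance with universe \(U\), \(|U|=3q\), and a family \(\mathcal{S}\) of \(3\)-subsets, I would build a weighted graph with a root \(v_0\), a vertex for every set, a vertex for every element, and small integer edge weights chosen so that the family of minimum spanning trees of \(G\) corresponds precisely to the admissible ways of connecting every element through some set that contains it. The reduction would then exploit the identity \(c_\gamma(T)=\sum_{e\in T} c(e)\, s_e\), where \(s_e\) denotes the number of vertices in the component of \(T-e\) not containing \(v_0\) (the subtree hanging below \(e\)), to express the secondary path-cost objective as a count over the chosen covering structure, so that a path cost below the threshold \(K\) is attainable exactly when an exact cover of size \(q\) exists.

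The hard part will be the design of this gadget, and in particular making the secondary objective genuinely separate yes- from no-instances. A naive bipartite root--set--element construction fails: there every element sits at a fixed depth regardless of how the cover is chosen, so \(c_\gamma(T)\) is constant over all minimum spanning trees and encodes nothing. The gadget must therefore be engineered so that the combinatorial choice actually alters the subtree sizes \(s_e\) (equivalently, the depths from \(v_0\))---for instance by attaching pendant clusters or forced chains whose position in the tree depends on which sets are used to connect the elements, thereby translating ``few, clean sets'' into ``small total path cost.'' Choosing the weights so that they simultaneously (i) force the intended minimum-spanning-tree backbone while leaving exactly the selection freedom that encodes the cover, and (ii) create a large enough gap in \(c_\gamma\) between covering and non-covering configurations, and then verifying the resulting two-way correspondence between minimum spanning trees and feasible covers, is the technical heart of the argument. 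Once the gap is established, \NP-hardness of the decision problem follows, and combined with the membership argument this yields \NP-completeness.
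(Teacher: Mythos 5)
The hardness half of your argument is not a proof but a plan, and the plan stops exactly where the theorem's content begins. Your \NP-membership argument is fine (compute the minimum spanning tree weight \(M^\ast\) by Kruskal, certify with the tree itself), and the identity \(c_\gamma(T)=\sum_{e\in T}c(e)\,s_e\) with \(s_e\) the size of the subtree below \(e\) is correct and genuinely useful. But for \NP-hardness you only describe the \emph{properties} a reduction from \textsc{Exact Cover by 3-Sets} would need --- that the minimum spanning trees of the gadget graph must be in bijection with admissible covering configurations, and that the subtree sizes must separate covering from non-covering choices by a gap --- while explicitly conceding that the naive root--set--element construction fails and that designing a working gadget is ``the technical heart of the argument.'' That heart is missing: no concrete graph, no concrete weights, no threshold \(K\), and no verification of the two-way correspondence. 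As it stands, nothing in the proposal rules out the possibility that every such gadget collapses the same way the naive one does, so the reduction has not been established.

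The paper takes a different and much shorter route: it invokes the result of \cite{Eppstein} that finding a minimum spanning tree minimizing the path cost between a single given pair of vertices is \NP-complete (itself proved by reduction from 3-SAT), and derives the all-vertices version from it --- the route you mention only parenthetically as an alternative. If you want to complete your write-up without constructing an X3C gadget from scratch, adapting Eppstein's two-terminal result is the pragmatic choice; note, though, that even this requires an argument for why hardness of minimizing the distance to \emph{one} designated vertex transfers to minimizing the \emph{sum} of distances to all vertices (e.g.\ by padding the target vertex with enough pendant copies that its depth dominates \(c_\gamma\)), which is a step you would still need to spell out.
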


This theorem can be proven based on a result by \cite{Eppstein} which states that finding a minimum spanning tree that minimizes the path costs between a given pair of vertices is \NP-complete. This is shown by reduction form 3-SAT.

Thus, \cite{Vasko} shows that determining $\text{lexmin}_{T \in \mathcal{T}}( c_{\tau}(T),c_{\gamma}(T))$ is  \NP-complete. Again, as any lexicographically optimal solution is also a (supported) efficient point, determining at least one non-dominated point is \NP-complete. From that, we can follow two statements.

\begin{corollary}[\citealt{Vasko}]\label{CTPNPcomplete}
	Let $\gamma, \tau > 0$, then the CTP is \NP-complete.
\end{corollary}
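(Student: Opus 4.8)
The plan is to treat the decision version of the CTP, ``does there exist $T\in\mathcal T$ with $\tau\,c_\tau(T)+\gamma\,c_\gamma(T)\le B$?'', and to establish both membership in \NP{} and \NP-hardness for \emph{each fixed} pair $\gamma,\tau>0$. Membership is immediate: a spanning tree is a polynomial-size certificate, since $c_\tau(T)=\sum_{e\in T}c(e)$ and $c_\gamma(T)=\sum_{e\in T}n_e\,c(e)$ — with $n_e$ the number of vertices cut off from $v_0$ by $e$ — are both computable by a single traversal of $T$. For hardness I would give a Karp reduction from 3-SAT, refining the construction underlying Theorem~\ref{lexikoOptCTP2} (the reduction of \cite{Eppstein} for a minimum spanning tree of smallest path cost); the essential new requirement is that the reduction stay correct for an arbitrary but fixed weight ratio $\gamma/\tau$.

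Given a 3-SAT formula and fixed $\gamma,\tau>0$, I would construct a graph $G$ with root $v_0$ and edge costs $c$ in which the spanning trees of minimum total cost $c_\tau$ correspond to truth assignments, and for which the induced path cost $c_\gamma$ drops below a threshold $\beta$ exactly when the assignment satisfies the formula. The CTP objective would then be tuned so that a minimum spanning tree is chosen first and, among those, a path-cost minimizer: if $\Delta$ denotes the smallest positive gap in $c_\tau$ between a non-minimum and a minimum spanning tree, every non-minimum tree loses at least $\tau\Delta$ in the first term while it can gain at most $\gamma\cdot\operatorname{range}(c_\gamma)$ in the second. Setting $B=\tau W^\ast+\gamma\beta$, with $W^\ast$ the minimum spanning tree cost, the instance would answer ``yes'' precisely when a satisfying assignment exists, which yields the reduction.

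The main obstacle is exactly what separates the statement for every fixed $\gamma,\tau$ from the weaker claim for $\tau$ large relative to $\gamma$: because each edge is counted $n_e\ge 1$ times one has $c_\gamma(T)\ge c_\tau(T)$ pointwise, so the path term naturally outweighs the tree term, and uniform scaling $c\mapsto Mc$ is useless — it multiplies both $\Delta$ and $\operatorname{range}(c_\gamma)$ by $M$ and leaves their quotient, hence the comparison against the fixed ratio $\gamma/\tau$, unchanged. I would overcome this by a non-uniform construction separating the two roles of the edges: the edges responsible for the minimum-spanning-tree gap are inflated by a factor depending on the fixed constants $\gamma,\tau$ (which a polynomial reduction may read off), so that $\Delta$ grows, while the path-cost decision is carried by a structurally disjoint family of small-cost edges, keeping $\operatorname{range}(c_\gamma)$ bounded independently of that factor. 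The delicate point, which I expect to be the heart of the argument, is to route the inflated edges so that their subtree sizes $n_e$ are identical in every minimum spanning tree; their large contribution to $c_\gamma$ is then a fixed offset and does not reappear in $\operatorname{range}(c_\gamma)$. Once this positional rigidity is in place one secures $\tau\Delta>\gamma\cdot\operatorname{range}(c_\gamma)$ for the prescribed ratio, the reduction is correct for each fixed $\gamma,\tau>0$, and \NP-hardness — together with membership, \NP-completeness — follows.
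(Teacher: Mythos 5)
Your framing is sound in two respects: the decision version with \NP-membership via a spanning-tree certificate is correct (both $c_\tau(T)$ and $c_\gamma(T)$ are evaluable by one traversal), and your overall route --- a 3-SAT reduction in the spirit of the construction behind Theorem~\ref{lexikoOptCTP2}, adapted to an arbitrary fixed ratio $\gamma/\tau$ --- is essentially the route the literature takes. The paper itself proves nothing here: it infers the corollary from the \NP-completeness of $\operatorname{lexmin}_{T \in \mathcal{T}}(c_{\tau}(T),c_{\gamma}(T))$ and cites \cite{Vasko}, where the fixed-ratio hardness rests on Eppstein's reduction. You have also correctly identified the crux --- the claim is for \emph{every} fixed $\tau,\gamma>0$, not merely for $\tau/\gamma$ large --- and correctly argued that uniform scaling $c\mapsto Mc$ cannot help, since it rescales both objectives identically.

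However, the proposal stops exactly where the proof would have to begin, and the mechanism you sketch in its place does not work as stated. Your decisive inequality $\tau\Delta>\gamma\cdot\operatorname{range}(c_\gamma)$ compares minimum spanning trees against \emph{non-minimum} spanning trees, so the range must be taken over all spanning trees; yet you impose the ``positional rigidity'' of the inflated edges only \emph{in every minimum spanning tree}. Edges whose multiplicity $n_e$ is tree-independent by graph structure are bridges, and bridges lie in every spanning tree, hence contribute nothing to the gap $\Delta$; conversely, any edge that helps create $\Delta$ must be avoidable, i.e., omitted by some competing tree, and relative to such a tree the inflated edge's contribution to $c_\gamma$ (which is $n_e$ times its inflated cost, with $n_e\geqslant 1$, whenever it is present) is not a fixed offset. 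Inflating the gap-carrying edges by a factor $M$ therefore generically inflates $\operatorname{range}(c_\gamma)$ by order $M$ as well, and the quotient you must beat against the fixed ratio $\gamma/\tau$ does not improve --- the very obstruction you diagnosed for uniform scaling reappears through the pointwise relation $c_\gamma(T)\geqslant c_\tau(T)$. What your plan actually requires is much stronger: a gadget in which $c_\gamma$ is (near-)invariant over \emph{all} spanning trees up to the bounded contribution of the small-cost family carrying the satisfiability decision, while $c_\tau$ still separates assignment-encoding trees from all others --- and this under the CTP restriction that both objectives read the same single cost vector $c$, so the two roles of an edge cannot be decoupled as in the GCTP. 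That construction is the heart of the argument attributed to Eppstein, it is acknowledged but nowhere exhibited in your proposal, and until it is supplied the proposal remains a programme rather than a proof.
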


Hence, a weighted combination of two efficiently solvable single-objective problems results in a computationally hard optimization problem. 
In \cite{dellamico96multicriteria} several weighted sums of arborescence problems are considered and their computational complexity is analyzed in comparison to their respective single-objective parts. The authors observe there as well that the combination of objectives can lead to \NP-hard problems while each of the original problems is polynomially solvable.
Note that, if $\gamma$ or $\tau$ are equal to 0, the CTP can be solved in polynomial time, since we are not looking for the lexicographic optimal solutions but any tree that is optimal either for $c_{\gamma}$ or $c_{\tau}$, which are efficiently solvable problems.

Furthermore, the following holds for the BCTP.

\begin{theorem} \label{BCTPNPcomplete}
	Assume that no feasible solution minimizes both objective functions of the \ref{BCTP} at the same time. Then, the STOP problem, which either computes another non-dominated point or returns that no such point exists, for enumerating the non-dominated set for the \ref{BCTP} is \NP-complete. 
\end{theorem}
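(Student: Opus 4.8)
The plan is to establish the two halves of NP-completeness separately: membership in NP for the associated decision problem, and NP-hardness by a reduction from the NP-complete problem of Theorem~\ref{lexikoOptCTP2}. The guiding observation is that the non-dominated point with the smallest $c_\tau$-value is exactly $\text{lexmin}_{T \in \mathcal{T}}(c_\tau(T), c_\gamma(T))$, so that locating this particular extreme point is already NP-hard, while the opposite extreme is cheap.

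First I would phrase the search task as the decision problem: given the \ref{BCTP} instance together with a set $N$ of non-dominated points found so far, does there exist a spanning tree whose image is neither dominated by nor equal to any point of $N$? Membership in NP is then immediate, with a spanning tree $T$ serving as certificate. Verifying that $T$ is spanning, computing its unique $v_0$-rooted paths and hence the pair $(c_\gamma(T), c_\tau(T))$, and comparing this pair against each of the finitely many points of $N$ can all be done in polynomial time. Moreover, if such a $T$ exists then there is a non-dominated point outside $N$, since any non-dominated point that dominates $f(T)$ must itself lie outside $N$; hence the decision problem faithfully captures the STOP question.

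For NP-hardness I would exploit the contrast between the two lexicographic extremes. The point $\text{lexmin}_{T \in \mathcal{T}}(c_\gamma(T), c_\tau(T))$ is available in polynomial time by Theorem~\ref{lexikoOptCTP1}, whereas $\text{lexmin}_{T \in \mathcal{T}}(c_\tau(T), c_\gamma(T))$ is NP-complete to compute by Theorem~\ref{lexikoOptCTP2}. The standing assumption that no feasible tree minimizes both objectives simultaneously guarantees that these two extreme points are distinct, so the hard extreme $y^R$ is a genuine additional non-dominated point rather than a degenerate coincidence. The reduction would start from an instance of the problem in Theorem~\ref{lexikoOptCTP2} and produce a \ref{BCTP} instance together with a set $N$ comprising every non-dominated point except $y^R$. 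A single call to STOP on this instance must then either exhibit $y^R$ or certify that no further point exists; since $y^R$ is precisely the optimal solution of the hard lexicographic problem, answering STOP solves the NP-complete instance, which establishes NP-hardness and, with the membership argument, NP-completeness.

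The main obstacle is the construction of this partial enumeration $N$. The non-dominated set of the \ref{BCTP} may be exponentially large, so $N = Y_N \setminus \{y^R\}$ cannot be obtained by naive enumeration, and feeding an arbitrary point set is illegal: every element of $N$ must be a genuine non-dominated point for the pair to be a valid STOP instance. The delicate step is therefore to augment the hard instance with gadget edges (or to restrict to a suitable instance family) so that all non-dominated points other than $y^R$ become supported extreme points whose coordinates can be listed in polynomial time, while the existence and position of $y^R$ still encode the original NP-complete question. Proving that the construction yields exactly $N = Y_N \setminus \{y^R\}$, with the collapsed part polynomially computable, is where the real work lies; once it is in place the theorem follows.
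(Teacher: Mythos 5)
Your diagnosis of where the hardness comes from coincides with the paper's: the non-dominated point with minimal $c_\tau$-coordinate is the image of $\text{lexmin}_{T\in\mathcal{T}}(c_\tau(T),c_\gamma(T))$, which is \NP-hard to compute by Theorem~\ref{lexikoOptCTP2}, while the opposite lexicographic extreme is polynomial by Theorem~\ref{lexikoOptCTP1}, and the standing assumption makes the two distinct. Your NP-membership argument (a spanning tree as certificate, evaluated and compared against the finitely many points of $N$) is correct and is more than the paper writes down explicitly.

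The gap is exactly the one you flag yourself and then leave open: your reduction must hand the STOP oracle the set $N = Y_N\setminus\{y^R\}$, and you give no construction of an instance family for which this set is polynomially bounded, polynomially computable, and certifiably contained in $Y_N$, while $y^R$ still encodes the \NP-complete question behind Theorem~\ref{lexikoOptCTP2}. By Theorem~\ref{Theorem_Intractable} the set $Y_N$ can be exponentially large, so this is not a technicality; the sentence ``once it is in place the theorem follows'' defers essentially the entire content of the hardness proof. Note that the paper does not attempt this single-call reduction at all. Its much shorter argument is: under the assumption there are at least two non-dominated points, namely the images of the two lexicographic optima, and any procedure that enumerates $Y_N$ must in particular determine $f(\text{lexmin}_{T\in\mathcal{T}}(c_\tau(T),c_\gamma(T)))$; hence enumerating the non-dominated set of \ref{BCTP} inherits the hardness of Theorem~\ref{lexikoOptCTP2} directly, with no need to isolate the hard point via an explicit partial enumeration $N$. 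If you want to keep your finer-grained formulation --- hardness of a single STOP call with $N$ as part of the input --- you must either carry out the gadget construction you sketch or retreat to the paper's coarser argument about the enumeration as a whole.
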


\begin{proof}
	As the BCTP defines a bi-objective problem and no feasible solution simultaneously minimizes both objectives, there exist at least two different non-dominated points, which can be determined by the two lexicographic optimization problems $\text{lexmin}_{T \in \mathcal{T}}( c_{\gamma}(T),c_{\tau}(T))$ and $\text{lexmin}_{T \in \mathcal{T}}(c_{\tau}(T),c_{\gamma}(T))$. 
	
	Since the problem $\text{lexmin}_{T \in \mathcal{T}}(c_{\tau}(T),c_{\gamma}(T))$ is already \NP-hard (Theorem~\ref{lexikoOptCTP2}), the statement follows.
\end{proof}

Since the Cable-Trench Problem is a special case of its generalized formulation in which the edge cost coefficients for both objective functions remain the same, we can state the corrolary given below.

\begin{corollary} \label{GBCTPNPcomplete}
	The STOP problem for enumerating the non-dominated set for the \ref{BGCTP} is \NP-complete, assuming that the ideal point is infeasible.
\end{corollary}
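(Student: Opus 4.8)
The plan is to exploit the fact, noted just above the statement, that the \ref{BCTP} is literally the special case of the \ref{BGCTP} obtained by equating both cost coefficients of every edge, i.e.\ $s_e = t_e = c_e$ for all $e \in E$. I would therefore prove the corollary by a (trivial) polynomial reduction from the STOP problem for the \ref{BCTP}, whose \NP-completeness is guaranteed by Theorem~\ref{BCTPNPcomplete}.

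Concretely, I would first fix an instance of the \ref{BCTP} with infeasible ideal point; such instances exist and are exactly the hard ones, since by Theorem~\ref{lexikoOptCTP2} computing $\operatorname{lexmin}_{T \in \mathcal{T}}(c_\tau(T), c_\gamma(T))$ is already \NP-complete. Given such an instance on $G = (V,E)$ with edge costs $c_e$, I would construct the \ref{BGCTP} instance on the same graph with $\widetilde{c}_e = (s_e, t_e) := (c_e, c_e)$. Then $s_\gamma(T) = c_\gamma(T)$ and $t_\tau(T) = c_\tau(T)$ for every spanning tree $T \in \mathcal{T}$, so the two problems have identical feasible sets and identical objective vectors; in particular their non-dominated sets coincide and the ideal point of the constructed \ref{BGCTP} instance is infeasible as well. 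Hence any procedure solving the STOP problem for the \ref{BGCTP} also solves it for the underlying \ref{BCTP}, and the reduction (duplicating each coefficient) is clearly computable in polynomial time. This establishes \NP-hardness.

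It remains to argue membership in \NP, which I would handle exactly as for the \ref{BCTP}: a candidate spanning tree $T$ serves as a certificate, since one can verify $T \in \mathcal{T}$ and compute the two objective values $s_\gamma(T)$ and $t_\tau(T)$ in polynomial time and compare them against the previously enumerated points. Combining this with the \NP-hardness above yields \NP-completeness.

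The main obstacle is not the reduction itself — it is essentially the identity on the combinatorial structure — but rather making precise that the hypotheses of Theorem~\ref{BCTPNPcomplete} carry over. In particular I would verify carefully that ``ideal point infeasible'' for the \ref{BGCTP} instance is implied by the same assumption on the source \ref{BCTP} instance (which holds here because the objective vectors are literally equal), and that the hardness genuinely stems from having to produce the hard lexicographic point rather than from merely enumerating further points. Beyond these bookkeeping points, the statement follows immediately from Theorem~\ref{BCTPNPcomplete}.
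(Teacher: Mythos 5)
Your proposal is correct and matches the paper's (one-line) justification exactly: the corollary is obtained by observing that \ref{BCTP} is the special case of \ref{BGCTP} with $s_e = t_e = c_e$, so \NP-hardness transfers from Theorem~\ref{BCTPNPcomplete} via the identity reduction, and membership in \NP{} follows as before. Your additional bookkeeping (that the objective vectors, non-dominated sets, and ideal-point infeasibility carry over verbatim) is a welcome elaboration of what the paper leaves implicit.
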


Note that the statement from Theorem \ref{BCTPNPcomplete} also holds for problem instances where all efficient solutions are strictly efficient, which means that every non-dominated point has a uniquely defined preimage.

To summarize, although the associated single-objective problems are efficiently solvable, the same does not hold true for its bi-objective combination, in which both objective functions are minimized simultaneously, even if only two non-dominated points exist.

\subsubsection{Intractability}

Whereas in the multi-objective setting complexity is often concerned with how computationally hard it is to determine individual non-dominated points, intractability is concerned with the cardinality of the non-dominated set. More precisely, we are interested in whether the number of non-dominated points increase exponentially with the size of the instance.

\begin{theorem}
	The bi-objective Cable-Trench Problem (\ref{BCTP}) is intractable. 
	\label{Theorem_Intractable}
\end{theorem}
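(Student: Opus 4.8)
The plan is to prove intractability in the standard sense of \citet{Ehr2005}: I will exhibit a family of BCTP instances whose number of non-dominated points grows exponentially in the number of vertices. The construction is a \emph{star of triangle gadgets}: for each \(k\) I build a graph \(G_k\) consisting of a root \(v_0\) together with \(k\) independent gadgets, where gadget \(i\) introduces two fresh vertices \(a_i,b_i\) and the three edges \((v_0,a_i)\), \((v_0,b_i)\), \((a_i,b_i)\). Since deleting \(v_0\) disconnects the gadgets from one another, every spanning tree of \(G_k\) decomposes into an independent choice, in each gadget, of a spanning tree of the triangle \(\{v_0,a_i,b_i\}\); hence the spanning trees of \(G_k\) are exactly the \(3^k\) ways of selecting two of the three triangle edges in every gadget. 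Because each gadget attaches directly to the root, the unique path from \(v_0\) to any vertex stays inside its own gadget, so both \(c_\gamma\) and \(c_\tau\) are sums of per-gadget contributions and the gadgets act additively and independently.

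Next I engineer a trade-off inside each gadget. Comparing the configuration ``both direct'' (edges \((v_0,a_i),(v_0,b_i)\)) with the configuration ``chain'' (edges \((v_0,a_i),(a_i,b_i)\)), a short computation of the path and edge sums shows that switching from the former to the latter changes the gadget's contribution to \((c_\gamma,c_\tau)\) by \(\bigl(+(p_i+e_i-q_i),\,-(q_i-e_i)\bigr)\), where \(p_i,q_i,e_i\) denote the costs of \((v_0,a_i),(v_0,b_i),(a_i,b_i)\). I will choose the costs so that both the \(c_\gamma\)-increase and the \(c_\tau\)-decrease equal \(2^i\); solving the two equations forces \(p_i=2^{i+1}\) and \(q_i-e_i=2^i\) (for definiteness \(e_i=2^i+1\), \(q_i=2^{i+1}+1\)). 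With these weights, activating the ``chain'' option on a subset \(S\subseteq\{1,\dots,k\}\) of gadgets yields \(c_\gamma=\gamma_0+\sum_{i\in S}2^i\) and \(c_\tau=\tau_0-\sum_{i\in S}2^i\) relative to the all-direct baseline \((\gamma_0,\tau_0)\). Thus all \(2^k\) such solutions lie on the anti-diagonal \(c_\gamma+c_\tau=\gamma_0+\tau_0\), and by uniqueness of binary expansions their \(c_\gamma\)-values are pairwise distinct. Distinct points on a line of slope \(-1\) are mutually non-dominated, so these \(2^k\) points form an antichain.

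The main obstacle is that these \(2^k\) ``good'' solutions are not all the spanning trees: each triangle has a third spanning tree (edges \((v_0,b_i),(a_i,b_i)\)), and I must guarantee that no spanning tree using one or more third options dominates a point of the antichain. Here I will use a coordinate-sum argument: a point can be dominated only by one whose coordinate sum is no larger, yet every antichain point has coordinate sum exactly \(\gamma_0+\tau_0\). Computing the per-gadget coordinate sum for the three options shows that the two chosen options contribute equal sums, while the third contributes a strictly larger sum precisely when \(e_i>2^i\) — which my choice \(e_i=2^i+1\) guarantees (the per-gadget excess is \(3(e_i-2^i)>0\)). Hence any spanning tree activating at least one third option has total coordinate sum strictly exceeding \(\gamma_0+\tau_0\) and cannot dominate an antichain point. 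Since \(G_k\) has \(n=2k+1\) vertices, this yields at least \(2^k=2^{(n-1)/2}\) non-dominated points, establishing intractability of the BCTP. The delicate aspect throughout is this global control over the entire exponential set of spanning trees — not merely the ones I single out — which the star topology (ensuring gadget independence) and the coordinate-sum bookkeeping are precisely designed to make manageable.
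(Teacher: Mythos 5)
Your proof is correct and takes essentially the same route as the paper: your ``star of triangle gadgets'' is exactly the paper's windmill graph, and both arguments extract $2^{(n-1)/2}$ mutually non-dominated points from independent binary choices in the gadgets while ruling out the third spanning tree of each triangle. The only difference is bookkeeping --- the paper separates gadget contributions into distinct base-$10$ digits and observes that the third option is locally dominated, whereas you use powers of two to place all $2^{k}$ points on a common anti-diagonal and exclude the third option by a coordinate-sum argument; both are valid Hansen-style encodings.
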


\begin{proof}
	To show the intractability of the Cable-Trench Problem, we construct an instance such that $\abs{\YN} $ grows exponentially in $n$. Let $V = \{v_0, \ldots, v_n\}$ be a set of vertices, where $n$ is an even number. Then, we define three different sets of edges which form a kind of windmill structure with $\frac{n}{2}$ blades, each consisting of three edges as shown in Figure~\ref{exm_intractable}.  Thereby, all nodes $v_1, \dots, v_n$ are directly connected with $v_0$ by edges $a_k$ or $d_k$ for $k = 1, 2, \dots, \frac{n}{2}$. Moreover, there are edges $b_k, k = 1, 2, \dots, \frac{n}{2}$, connecting nodes $v_{2k-1}$ and $v_{2k}$.
	
	In analogy to the intractability proof for the bi-objective shortest path problem \citep{hansen1979}, the basic idea is to choose edge costs such that: (1) there is an exponential number of efficient solutions (represented by spanning trees) and (2) the efficient solutions map to pairwise distinct non-dominated points.
	
	We will see that the edge costs in Table~\ref{Table_Edgecosts_Intractable} achieve this goal. Note that the BCTP can be solved on each windmill blade separately, as changing a solution in the $k$-th blade affects the objective functions $c_{\tau}$ and $c_{\gamma}$ only in the $k$-th digit. 
	Moreover, in each windmill blade $k$ the edge $a_k$ is part of any efficient solution, since any solution including $b_k$ and $d_k$ is dominated by the solution using $a_k$ and $b_k$. Then, in each windmill blade, there remain exactly two options to generate a spanning tree, namely by choosing either edge $b_k$ or edge $d_k$ with $k = 1, \dots, \frac{n}{2}$. In total there are $2^{\frac{n}{2}}$ efficient solutions, which map to $2^{\frac{n}{2}}$ distinct non-dominated points.
\end{proof}

\begin{table}
	\centering
	\small 
	\begin{tabular}{lll}
		\toprule
		Edge costs \\
		\midrule
		$c(e) = 3 \cdot 10^{k-1}  $, & for $e = a_{k}= [v_0, v_{2k-1}],$ 
		&$k = 1,2 \ldots, \frac{n}{2}$\\[2ex]
		$c(e) = 4 \cdot 10^ {k-1 }$, & for $e = d_{k} = [v_0, v_{2k}], $
		&$k = 1,2 \ldots, \frac{n}{2}$\\[2ex]
		$c(e) = 2 \cdot 10^ {k-1 }$, & for $e = b_{k} =  [v_{2k-1}, v_{2k}], $
		&$k = 1,2 \ldots, \frac{n}{2}$ \\
		\bottomrule
	\end{tabular}	
	\caption{Intractability example: edge costs (see Proof of Theorem~\ref{Theorem_Intractable})}
	\label{Table_Edgecosts_Intractable}
\end{table}

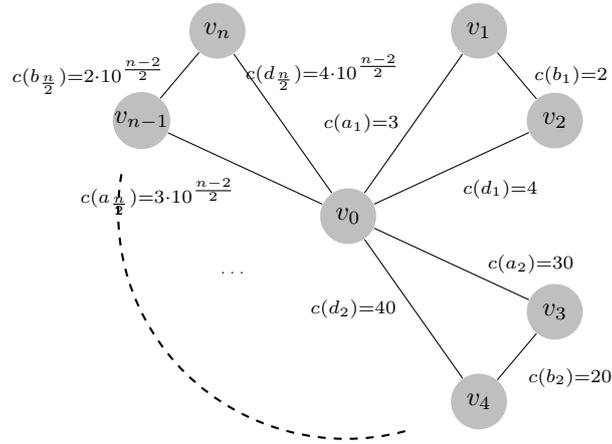
\begin{figure}
	\centering
	\begin{tikzpicture}[shorten >=0.001pt,-, scale=0.75]
		\tikzstyle{vertex}=[circle,fill=black!25,minimum size=21pt,inner sep=0pt]
		
		\foreach \name/\beta in {1/55, 2/25, 3/-25, 4/-55, n-1/-205, n/-235}
		\node[vertex] (G-\name) at (\beta:4) {$v_{\name}$};
		\node[vertex] (G-0) at (0,0) {$v_{0}$};
		\draw [thick, dashed] (1,-3.8) arc (-75:-190:4) ;

		\tikzstyle{vertex}=[ {circle, minimum size=10pt,inner sep=0pt}]
		\node[vertex] (G-points) at (-2,-1) {\tiny $\dots$};
		
		\draw (G-0) -- (G-1) node [midway, left=2pt] {$\scriptstyle c(a_1)= 3$};
		\draw (G-0) -- (G-2) node [midway, below right=1pt] {$\scriptstyle c(d_1)= 4$};
		\draw (G-1) -- (G-2) node [midway, right=3pt] {$\scriptstyle c(b_1)= 2$};
		
		\draw (G-0) -- (G-3) node [midway, right=10pt] {$\scriptstyle c(a_2)= 30$};
		\draw (G-0) -- (G-4) node [midway, left=3pt] {$\scriptstyle c(d_2)= 40$};
		\draw (G-3) -- (G-4) node [midway, below right=1pt] {$\scriptstyle c(b_2)= 20$};
		
		\draw (G-0) -- (G-n-1) node [midway, below left = 0.1pt] {$\scriptstyle c(a_{\frac{n}{2}})= 3 \cdot 10^{\frac{n-2}{2}}$};
		\draw (G-0) -- (G-n) node [midway, above = 9pt] {$\scriptstyle \qquad \quad c(d_{\frac{n}{2}})= 4 \cdot 10^{\frac{n-2}{2}}$};
		\draw (G-n-1) -- (G-n) node [midway, left=2pt] {$\scriptstyle c(b_{\frac{n}{2}})= 2 \cdot  10^{\frac{n-2}{2}}$};
		
	\end{tikzpicture}

	\caption{Intractability example: Graph (see Proof of Theorem~\ref{Theorem_Intractable}) \label{exm_intractable_nocosts}}
	\label{exm_intractable}
\end{figure}

\begin{corollary}
	The Generalized Cable-Trench Problem (\ref{BGCTP}) is intractable. 
	\label{Theorem_GCTPIntractable}
\end{corollary}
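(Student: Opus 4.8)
The plan is to prove intractability of the \ref{BGCTP} by exploiting that the \ref{BCTP} is a special case of the generalized formulation, exactly as Corollary~\ref{GBCTPNPcomplete} was obtained from Theorem~\ref{BCTPNPcomplete}. Concretely, any instance of the BCTP with edge costs $c_e$ can be read as an instance of the BGCTP by choosing both cost coefficients equal, i.e.\ $\widetilde{c}_e = (s_e, t_e) = (c_e, c_e)$ for all $e \in E$. Under this identification the two BGCTP objectives satisfy $s_\gamma(T) = c_\gamma(T)$ and $t_\tau(T) = c_\tau(T)$ for every spanning tree $T \in \mathcal{T}$, so the feasible set and its entire image in objective space coincide with those of the BCTP. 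In particular the non-dominated sets agree.

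First I would take the windmill instance constructed in the proof of Theorem~\ref{Theorem_Intractable}, together with the edge costs listed in Table~\ref{Table_Edgecosts_Intractable}, and reinterpret it as a BGCTP instance via $s_e = t_e = c(e)$. Because the embedding preserves both objective functions pointwise, the resulting BGCTP instance inherits the full analysis of the proof of Theorem~\ref{Theorem_Intractable}: on each of the $\frac{n}{2}$ windmill blades the edge $a_k$ is forced, and the remaining choice between $b_k$ and $d_k$ yields two incomparable partial solutions whose contributions affect only the $k$-th decimal digit of $c_\tau$ and $c_\gamma$. Hence there are $2^{n/2}$ efficient spanning trees mapping to $2^{n/2}$ pairwise distinct non-dominated points, and therefore $\abs{\YN}$ grows exponentially in $n$ for the BGCTP as well.

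Since the combinatorial heart of the argument is already supplied by Theorem~\ref{Theorem_Intractable}, there is no substantive obstacle here; the corollary is essentially a statement of inheritance along a special-case embedding. The only point that genuinely requires verification is that setting $s_e = t_e = c_e$ reduces the BGCTP objectives to the BCTP objectives, and this is immediate from the definitions of $s_\gamma, t_\tau$ and $c_\gamma, c_\tau$. Consequently the exponential lower bound on the cardinality of the non-dominated set transfers verbatim, which establishes that the \ref{BGCTP} is intractable.
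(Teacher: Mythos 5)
Your proposal is correct and matches the paper's (implicit) argument: the paper states this corollary without a separate proof precisely because the CTP is the special case of the GCTP with $s_e = t_e = c_e$, so the exponential family of non-dominated points from the windmill instance of Theorem~\ref{Theorem_Intractable} carries over verbatim. Your verification that $s_\gamma(T) = c_\gamma(T)$ and $t_\tau(T) = c_\tau(T)$ under this embedding is exactly the point that needs checking, and it holds.
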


\section{Integer Programming Formulation} \label{sec:IPFormulation}
Since the (generalized) cable-trench problem, just like any other combinatorial program, can be formulated as an integer program and solved as such, we present a bi-objective integer linear programming formulation of the CTP and GCTP.
\setlength{\jot}{7pt}
\begin{subequations}
	\begin{align}
		\text {(CTP)} 
		&& \min\; & \sum_{i=1}^{n} \sum_{i=1}^{n} c_{ij} \, x_{ij} \\
		&& \min\;& \sum_{i=1}^{n} \sum_{j=i+1}^{n} c_{ij} \, y_{ij} \\
		&& \text{s.t.}\; & \sum_{j=1}^{n} x_{1j} = n-1 && \label{shortestpatheq1} \\
		&& & \sum_{j=1}^{n} x_{ij} - \sum_{k=1}^{n} x_{ki}  =  -1 && \text{for all } i \in \{2, 3, \dots, n\} \label{shortestpatheq2} \\
		&& & \sum_{i=1}^{n} \sum_{j=i+1}^{n} y_{ij}  =  n-1 \label{msteq} \\
		&& & (n-1) \, y_{ij} - x_{ij} - x_{ji} \geqslant 0 && \text{for all } i,j \in \{1, \dots, n\} \text{ with } i<j \label{couplinconstraint} \\
		& & & x_{i1} = 0 && \text{for all } i \in \{1, \dots, n\} \\
		&& & x_{ij}  \in \Z_{\geqslant} && \text{for all } i,j \in \{1, \dots, n\} \text{ with } j \neq 1 \label{shortestpathvarconstraint} \\
		&& & y_{ij}  \in  \{0,1\} && \text{for all } i,j \in \{1, \dots, n\} \text{ with } i<j \label{mstvarconstraint}
	\end{align}
\end{subequations}

Variables $x_{ij}$ represent the number of cables from vertex $i$ to vertex $j$, $y_{ij}$ is $1$ if a trench is dug between vertex $i$ and $j$ ($i < j$), $0$ otherwise, and $c_{ij}$ represents the respective costs for choosing the edge that connects vertex $i$ and vertex $j$. Hence, variables $x_{ij}$ can be interpreted as flow variables modelling the shortest path. Variables $y_{ij}$ on the other hand can be interpreted as those representing the spanning tree. Note that no cables flow back to vertex $1$. If no such edge $e = (i,j)$ between vertex $i$ and $j$ exists, we set $x_{ij} = x_{ji} = y_{ij} = 0$.

Now, we will have a closer look at the set of the constraints. Constraint \eqref{couplinconstraint} can be considered as a coupling constraint of  the two variable types and thus of the two objective functions. This restriction ensures, that (shortest) paths can only use edges which are part of the spanning tree solution. Constraint \eqref{shortestpatheq1} and \eqref{shortestpatheq2} refer to flow conservation constraints modelling the single-source shortest path problem. Constraint \eqref{msteq} is part of the minimum spanning tree problem. It ensures that exactly $n-1$ edges are chosen. In other words, the edges form a spanning tree. This restriction can be omitted without changing the optimal solution, but it considerably strengthens the formulation of the CTP. We will discuss this later in more detail. Note that subtour elimination constraints are not necessary in the formulation of the cable-trench problem as constraints \eqref{shortestpatheq1},  \eqref{shortestpatheq2} and \eqref{couplinconstraint} ensure that in any efficient solution $(\bar{x}, \bar{y})$, the graph is connected and the edge set $\{ [i,j] \in E : y_{ij} = 1\}$ is cycle free and thus a spanning tree. 
In fact, the problem can be relaxed by omitting integrality constraint \eqref{shortestpathvarconstraint}. This can be easily seen from the following. Given $y_{ij} \in \{0,1\}$ fixed for all edges $(i,j) \in E$. The resulting problem that needs to be solved ends up being a single-source shortest path problem on a subgraph, which has an integral polyhedral formulation. A proof and further reading can be found in \cite{Wols1998}. However, the integrality constraints of the minimum spanning tree variables can not be omitted. Neither constraint \eqref{couplinconstraint} nor constraint \eqref{msteq} force $y_{ij}$ to be integral. Most likely the contrary is true. Due to the minimization of the second objective $y_{ij}$ is chosen just large enough to satisfy the bigM constraint \eqref{couplinconstraint}. Accordingly, the CTP can be formulated as a bi-objective zero-one mixed-integer linear program.

Similarly, the GCTP can be formulated by simply substituting the two objective functions of the CTP as follows: 
\begin{subequations}
	\begin{align}
		\text {(GCTP)} 
		&& \min\; & \sum_{i=1}^{n} \sum_{i=1}^{n} s_{ij}\, x_{ij}  && \\
		&& \min\; & \sum_{i=1}^{n} \sum_{j=i+1}^{n} t_{ij}\, y_{ij} &&  \\
		&& \text{s.t.}\; & \eqref{shortestpatheq1} - \eqref{mstvarconstraint} \notag
	\end{align}
\end{subequations}

\section{Multi-objective Solution Strategy}  \label{sec:SolutionStrategy}
Scalarization is the most frequently applied solution approach not only for MOCO, but for multi-objective optimization in general. It is based on the idea of combining objective functions and/or reformulating objectives as constraints to obtain a single-objective optimization problem. In the following, we will discuss those techniques that are most closely associated with our work and the problem introduced by \cite{Vasko} itself. For a detailed literature review of solution methods for MOCO we refer to \cite{EhrGand2000}. For an introduction to scalarization methods we refer to \cite{Ehr2005}.

One of the most common scalarization techniques is the \emph{weighted sum scalarization} which has already been discussed in Section \ref{Section Bi-Objective CTP}. The intuitiveness of this technique not only makes it easy for a decision maker to understand, but also provides a comparably fast method to find all (extreme) supported non-dominated points (see, e.g., \cite{AnejaNair} and \cite{Cohon}). However, the weighted sum approach generally fails in computing the entire Pareto set, since it is not possible to find unsupported solutions, as we have already seen for the (generalized) cable-trench problem as well. One way to overcome that is to use the two phase method \citep{UlungTeg1995}. In the first phase the set of supported non-dominated points is computed, for example, via the weighted sum approach. In the second phase the set of non-supported non-dominated points is determined in any box defined by adjacent extreme supported non-dominated points and their local nadir point. Tighter bounds can improve the procedure as discussed, for example, in \cite{Przy2011}.

Another frequently applied scalarization technique is the \emph{$\epsilon$-constraint method}, which in contrast to the weighted sum is able to generate supported and non-supported non-dominated points. The method was first introduced by \cite{Haimes1971} and further discussed in \cite{Chankong1983}. Instead of aggregating both objectives of the BOCO, only one of the objectives is selected to be optimized while the other one is incorporated as an additional constraint. The resulting $\epsilon$-constraint problem to be solved is given by
\begin{subequations}
	\begin{alignat}{3}
		(\text{P}_{\epsilon})
		&\qquad \qquad \qquad &\min \;& f_i(x)  &&	
		\\
		&&\text{s.\,t.} \;& f_k(x) \leqslant \epsilon &\qquad k = 1,2 \quad k \neq i \label{addepsconstraint}\\
		&&& x \in X
	\end{alignat}
\end{subequations}
where $\epsilon \in \R$. By substituting the objective of $\text{P}_{\epsilon}$ by $\{ \min f_i(x) + \upalpha f_k(x), k \neq i\}$ with $\upalpha$ sufficiently small, also known as the hybrid method \citep{Ehr2005}, it is possible to determine the set $Y_N$ without computing weakly efficient solutions as well.

In this work, we use the hybrid $\epsilon$-constraint approach to solve the B(G)CTP and further improve it by a problem-specific cutting plane. In particular, we use as objective function of the $\epsilon$-constraint problem the objective of the shortest path problem and incorporate the minimum spanning tree objective as additional constraint and as augmentation term of the hybrid method. The resulting $\epsilon$-constraint problems for both, BCTP and BGCTP, are given by:
\begin{subequations}
	\begin{alignat}{6}
		(\text{P}_{\epsilon})
		& \quad  &\min \;& c_{\gamma}(T) + \alpha \,c_{\tau}(T) & \qquad \qquad \qquad (\text{P}_{\epsilon}^{G}) & \quad  &\min \; & s_{\gamma}(T) + \alpha\, t_{\tau}(T) \notag \\
		&&\text{s.\,t.} \;& c_{\tau}(T) \leqslant \epsilon & && \text{s.\,t.} \;& t_{\tau}(T) \leqslant \epsilon \notag \\
		&&& T \in \mathcal{T} &&&& T \in \mathcal{T} \notag
	\end{alignat}
\end{subequations}

To compute the whole non-dominated set for the CTP, we iteratively solve $\epsilon$-constraint problems $\text{P}_{\epsilon^{r}}$, for which we set $\epsilon^{r} :\,= c_{\tau}(T^{r-1}) - 1$ in iteration $r$. In the first iteration $r=1$, we set $T^0 = \text{lexmin}_{T \in \mathcal{T}}(c_{\gamma}, c_{\tau})$. Otherwise, $T^{r-1}$ is the optimal solution of $\text{P}_{\epsilon^{r-1}}$ determined in the last iteration $r-1$. An analogous approach can be followed for the GCTP. Moreover, in order to avoid computing weakly efficient solutions, we set $\upalpha = 1 / d_{lex}$, where $d_{lex}$ denotes the precomputed largest difference in terms of the minimum spanning tree objective that can occur between two efficient solutions. Thus, we can guarantee to find the whole set $Y_N$.

Although the $\epsilon$-constraint method offers a way of completely solving the (G)CTP, we are still faced with the difficulty of its underlying complexity in terms of hardness and intractability, that we want to address here. To obtain one non-dominated point, one $\epsilon$-constraint problem needs to be solved, that is, a total of $\abs{\YN}-1$ times. Note that the lexicographic minima are precomputed.

\subsection{Cutting Plane}

Regardless of the chosen scalarization method, we still face the task of solving a sequence of single-objective integer programs (IPs). For the $\epsilon$-constraint approach, we see this immediately. In order to speed up the solution process for these IPs, the inclusion of problem specific cutting planes may be useful. The general idea is to add further constraints to the system to reduce the solution space, but without cutting off optimal integral solutions, see, e.g., \cite{Wols1998}. For example, constraint \eqref{msteq} can be understood as such a cutting plane. The cutting plane that we introduce in the following is specifically designed for the $\epsilon$-constraint problem.

Recall that in iteration $r+1$ the value $\epsilon$ is set to $\epsilon^{r+1} :\,= c_{\tau}(T^{r}) - 1$, the total edge cost of the optimal solution \((\bar{x}^r, \bar{y}^r)\) determined in iteration \(r\) minus one such that the solution \(T^r\) of the previous iteration becomes infeasible. Since exactly \(n-1\)  components of  \(\bar{y}^r\) are equal to one (to form a spanning tree), the minimum spanning tree must have an objective function value that is greater or equal to the \(n-1\) cheapest edges.
Consequently, any feasible solution $(x,y)$ in iteration $r$ satisfies
\begin{align*}
	\max_{\substack{i=1 \dots n \\ j = i+1 \dots n }} \left\{ c_{ij} y_{ij} \right\}  \leqslant \epsilon - \sum_{e \in S} c_e \quad \Leftrightarrow \quad c_{ij} y_{ij} \leqslant \epsilon - \sum_{e \in S} c_e \quad i,j \in \{1, \dots, n\} :  i < j,
\end{align*}
where $S$ be the set of the $n-2$ cheapest edges.

Note that the cut is aplicable for the GCTP as well by simply considering as edge costs for this cut the respective edge cost coefficients of the minimum spanning tree problem. Later, we refer to the cut as $\epsilon$-cut.

\section{Numerical Results}\label{sec:NumericalResults}
In order to evaluate the chosen solution strategy and particularly the benefit of the presented cutting plane, we examine numerical results on several test problem classes, that differ in their respective graph structure. In the following, we introduce those test sets and how they have been generated. After that we present the framework of our implementation and discuss resulting outcomes.

\subsection{Test Problems}

The considered test instances are classified according to their number of arcs and their underlying structure. All graphs are connected and each edge in a given graph is assigned two costs, where the range of  costs is between 1 and 100. 

The first test problem classes that we introduce vary primarily in their density. We generated \emph{incomplete} graph instances for which their densities are given by $d_1 = 0.125,\, d_2 = 0.25,\, d_3 = 0.5$ and $d_4 = 0.75$. These graphs are generated in the following way: to a spanning tree (to ensure the connectivity of the graph) additional edges are added until the desired density is satisfied. All edges are randomly added to the graph via a pseudo random number generator with a linear congruential generator algorithm, proposed by NETGEN \citep{netgen}. The edge costs are generated via the same pseudo random number generator. We test our solution approach also on \emph{complete} graphs $K_n$ with a density $d_5 = 1$, for which each pair of distinct vertices is connected by an undirected edge. The random number generator for the costs remains the same as for the incomplete graphs. 

Along with test problem classes that differ primarily in their density, we also consider classes that show a characteristic graph structure, such as \emph{grid} and \emph{location based} graphs. For both, the vertices of a graph lie in the same plane. To generate location based graphs, the number of vertices and arcs can be set manually. The vertices are determined randomly in the plane, following a) the standard normal distribution and b) the uniform distribution proposed by MATLAB. For the CTP, edge costs are set either by the Euclidean or the Manhattan distance. 
For the GCTP, the edges are assigned both the Euclidean and the Manhattan distance as costs. For both cases, a) and b), three ways of choosing arcs to build an instance are distinguished: randomly chosen arcs, arcs with minimum distance (w.r.t.\ either Euclidean or Manhattan distances). All graphs are created such that connectivity is ensured and the number of edges is equal to the predefined parameter selection. For our test setup, the density for each of the six test sets is $0.5$. In contrast, grid graphs are characterized by a two-dimensional rectangular grid constructed by the vertices. We generated undirected graph instances that differ only in their costs. The grid width is chosen as the rounded square root of the number of vertices. The random generator for the costs remains that of NETGEN. 

In total, we distinguish four test problem classes (\emph{incomplete, complete, location based, grid graph}) and 12 test sets (\emph{incomplete} graphs with a density of \emph{0.125, 0.25, 0.5, 0.75}, \emph{complete} graphs, \emph{location based} graphs with \emph{uniformly} and \emph{standard normally} distributed points for both of which arcs are chosen \emph{randomly}, with minimum \emph{Euclidean} and \emph{Manhattan} distance, \emph{grid} graphs). For all 12~test sets, we generated several sets of instances according to the number of vertices. Thus, each instance set is classified due to its test problem class and its test set, as well as by its number of vertices. For each instance set, 20 different graph instances per number of vertices are created. 

We test different problem sizes for the CTP and the GCTP, respectively. For the CTP, we examine instances with a number of $20, 25, \ldots, 75,100$ vertices. However, most of the test instances with 100 vertices could not be completely solved within the predefined time limit of 300 seconds. For the GCTP, on the other hand, we are testing instances with a number of $15, 20, \ldots, 45, 50$ vertices.

\subsection{Test Results}

The computational results we present in this section can be seen as an attempt to address several questions about the overall performance in solving the bi-objective (generalized) cable-trench problem with the chosen solution strategy.

\begin{enumerate}
	\item How well does the $\epsilon$-constraint method perfom on the bi-objective (G)CTP? Up to which problem size can test instances be solved? Are there significant differences in solving the CTP and the GCTP?
	\item Are there any distinctive features regarding the level of computational effort between the different test problem classes or test sets? How important is the graph density, cost and graph structure?
	\item What is the effect on tightening the formulation of the $\epsilon$-constraint method by adding an extra cut? Does an effect depend on a specific graph characteristic? 
\end{enumerate}

We start our evaluation by looking at the test results of the test sets that differ only in their density and have no other special graph structure, i.e., the \emph{incomplete} and \emph{complete} graphs. 

\begin{figure}
	
	\begin{minipage}{0.48\textwidth}
		\includegraphics[width=1\textwidth]{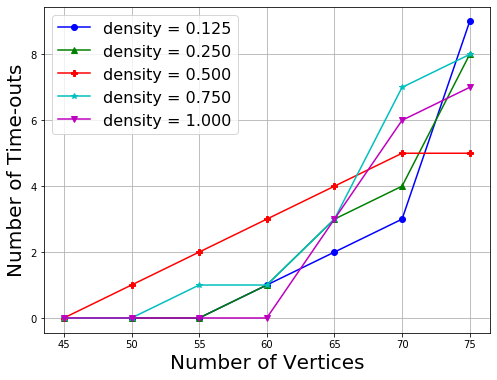}
		\caption{CTP Time-outs, (in)complete graph}
		\label{ctptimeouts_Fig}
	\end{minipage}
	\hfill
	\begin{minipage}{0.48\textwidth}
		\includegraphics[width=1\textwidth]{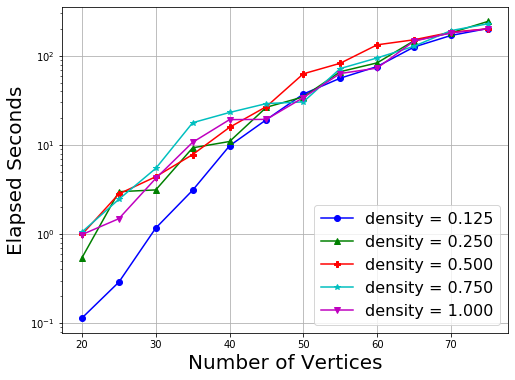}		
		\caption{CTP Runtime, (in)complete graph}
		\label{ctpruntime_Fig}
	\end{minipage}\\[0.5cm]
	\begin{minipage}{0.48\textwidth}
		\includegraphics[width=1\textwidth]{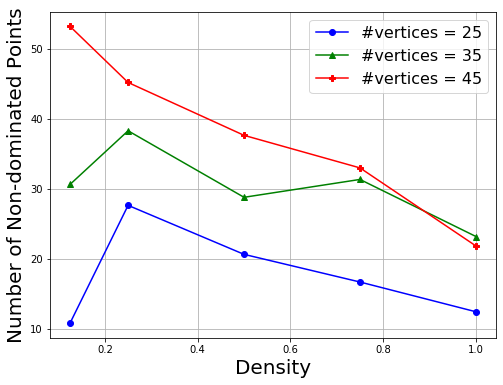}
		\caption{CTP \#non-dominated points, (in)complete graph}
		\label{dens_ndpoints_Fig}
	\end{minipage}
	\hfill
	\begin{minipage}{0.48\textwidth}
		\includegraphics[width=1\textwidth]{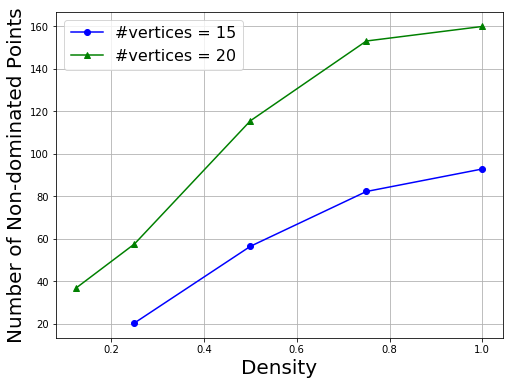}
		\caption{GCTP \#non-dominated points, (in)complete graph}
		\label{dens_ndpoints_gctp_Fig}
	\end{minipage}\\[0.5cm]
	\begin{minipage}{0.48\textwidth}
		\includegraphics[width=1\textwidth]{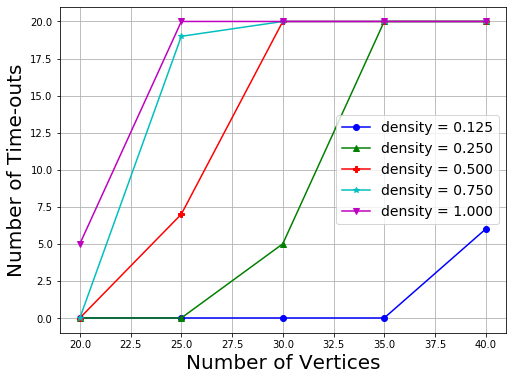}
		\caption{GCTP Time-outs, (in)complete graph}
		\label{gctptimeouts_Fig}
	\end{minipage}	
	\hfill
	\begin{minipage}{0.48\textwidth}
		\includegraphics[width=1\textwidth]{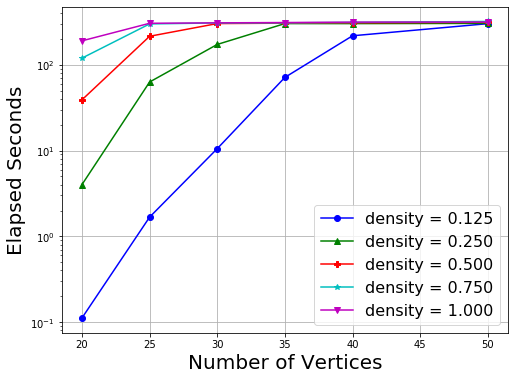}
		\caption{GCTP Runtime, (in)complete graph}
		\label{gctpruntime_Fig}
	\end{minipage}
\end{figure}

{ \small
	\begin{table}
		\begin{center}
			\begin{tabular}[h]{l c c c c c }\toprule
				&  \multicolumn{5}{c}{t [s] / \(|Y_N|\)} \\ 
				\midrule
				& $d = 0.125$ & $d = 0.25$ & $d = 0.5$ & $d = 0.75$ & $d = 1$  \\
				\midrule
				30 & 0.051 & 0.112 & 0.167 & 0.229 & 0.251 \\ 
				50 &  0.640 &  0.627 &  1.260 & 0.958 & 1.219 \\ 
				70 & 1.693 & 2.455 & 3.737 & 4.449 & 5.708 \\
				\bottomrule
			\end{tabular}
		\end{center}
		\caption{Elapsed seconds per non-dominated point}
		\label{s_ndpoints_Tab}
	\end{table}
}

In Figure~\ref{gctptimeouts_Fig}, we see that all instances of the GCTP with up to 35 vertices in a graph instance with a density $d = 0.125$ can be solved within the predefined time window. Test instances with a higher number of nodes can no longer be solved completely. As the density increases, the number of nodes for which instances can still be solved completely decreases. For the CTP, we see in Figure 4, that instances with up to 45 can be completely solved within the maximum time window, regardless of the density of the graph. Thus, we observe that solving the CTP is less computationally expensive than solving its generalized version. This was to be expected, as the cost coefficients for both objectives of the GCTP are generally different, thus increasing the potential for conflict between the objectives. With this, the first question we wanted to answer has been addressed.

Results that we can take from the first observations lead us to the direct discussion of the second question, namely whether there are differences in solving the (G)CTP with respect to the different test problem classes. As already stated, the computational effort needed to solve the GCTP increases with the density of the underlying graph of the problem. This can be seen in Figure~\ref{gctpruntime_Fig} as well. Again, this result is to be expected. With an increasing density, the number of variables increases and thus the set of feasible solutions as well. A higher number of feasible solutions merely leads to more feasible solutions of the two associated single-objective problems, that are generally in conflict, since the cost coefficients for both objective functions are independent of each other. The fact that the different cost coefficients potentially lead to more conflicts as the number of variables increases can be observed particularly from the results of the CTP, for which we obtain significantly different results with regard to the density of the underlying graph. In Figure~\ref{ctptimeouts_Fig} and in Figure~\ref{ctpruntime_Fig}, we see that with an increasing density the runtime to solve an instance does not necessarily increases as well. In fact, we even find that the runtime for solving complete graphs actually is less in comparison to instances with a lower density. From Table~\ref{s_ndpoints_Tab}, we can conclude that those runtime savings are not due to the fact that the time to determine one non-dominated point, i.e., to solve one $\epsilon$-constraint problem, decreases. On the contrary, the time to determine one non-dominated point increases with the density of the graph. This is to be expected, since the number of edges and thus the size of the resulting $\epsilon$-constraint problem to be solved increases. However, in Figure~\ref{dens_ndpoints_Fig}, we see that the number of non-dominated points is decreasing while the density is increasing. From that, we can follow that the two objective functions are less conflicting as the number of edges increases, which in turn explains that the runtime is decreasing with increasing density. This is in contrast to the GCTP, for which the number of non-dominated points is increasing as the density increases, see Figure~\ref{dens_ndpoints_gctp_Fig}. These different effects may be explained by the cost coefficients. Ultimately, the conflict between the objectives of the CTP is due to the fact that the cost coefficient of an edge $e$ with $y_e = 1$ is counted once in the minimum spanning tree objective while it is counted $k$ times if edge $e$ is occurring in $k$ paths. The conflict exists above all, when the chosen edges to construct a low-cost spanning tree lead to long and thus expensive paths in the shortest path objective. Then, the two objectives are in great conflict. Conversely, if the graph is rather dense the shortest paths from the root vertex to the other vertices often consist of a small number of edges. In this case, the two objectives are less conflicting, since the selected edges imply lower costs for both objectives due to the same low cost coefficients. However, the same is not true for the GCTP, for which the cost coefficients differ for both objectives. That is why, the effect we observe for the CTP cannot be observed for the GCTP.

{ \small
	\begin{table}
		\begin{center}
			\begin{tabular}[h]{p{2.2cm}lc c  c c }\toprule
				& & \multicolumn{2}{c}{CTP} & \multicolumn{2}{c}{GCTP }\\ 
				\midrule
				distribution of\newline points in the plane & \parbox{2.2cm}{\vspace{0.25cm}selected arcs \newline($d = 0.5$)} & \(|Y_N|\) & t [s] & \(|Y_N|\) & t [s]  \\
				\midrule
				& random & 70.00 & 39.69  & 71.85 & 37.75 \\ \cmidrule{2-6}
				uniformly &  \parbox{2.4cm}{\vspace{0.1cm}min Euclidean\vspace{0.1cm}} & 66.55 & 114.77 & 76.00  & 116.99 \\ \cmidrule{2-6}
				& \parbox{2.4cm}{\vspace{0.1cm}min Manhattan\vspace{0.1cm}}   & 60.65 & 78.14 & 61.45 & 74.95 \\ \midrule
				& random & 52.95 & 33.58 & 58.70 & 36.60 \\\cmidrule{2-6}
				normally & \parbox{2.4cm}{\vspace{0.1cm}min Euclidean\vspace{0.1cm}}  & 45.65 & 58.00 & 50.10 & 74.81 \\ \cmidrule{2-6}
				& \parbox{2.4cm}{\vspace{0.1cm}min Manhattan\vspace{0.1cm}} & 39.00 & 31.07 & 40.15 & 32.34 \\ 
				\bottomrule
			\end{tabular}
		\end{center}
		\caption{Number of non-dominated points for different problem sizes}
		\label{locationbased_Tab}
	\end{table}
}
Now, we want to analyze, whether the distribution of the cost coefficients may have an impact on solving the (G)CTP. Therefore, we examine the results of the test instances of \emph{location based} graphs, that allow us to create graph sets with different cost structures. In particular, we test instances with a density $d = 0.5$, for which we select edges according to different strategies. The results of solving instances with 25 vertices can be found in Table \ref{locationbased_Tab}. On the one hand, the distribution of points in the plane influences the length (Euclidean and Manhattan) of an edge and thus the cost structure of the different test sets. On the other hand, the connectivity and cost structure of instances differ due to the selection of the edges. The cost coefficients in the graph instances with a minimum Euclidean or minimum Manhattan distance edge selection vary less than randomly selected edges. Moreover, in test instances in which points are normally distributed, the edge costs show less variation than for uniformly distributed points. In Table \ref{locationbased_Tab}, we see that these differences in the cost structure have an impact on the number of non-dominated points as well as the runtime. The first observation from Table~\ref{locationbased_Tab} is that we get similar results in solving the CTP and the GCTP. Regardless of the edge generation strategy, the cost coefficients of the two objective functions of the GCTP are in our test instances positively correlated. Recall, that the cost coefficients of the first objective function correspond to Euclidean distances and of the second objective function to Manhattan distances.  Therefore, properties of the CTP are preserved when solving the GCTP. Another observation is that all test sets for which the points are normally distributed in the plane have a lower number of non-dominated points as well as less computational runtime in comparison to the test sets for which the points are uniformly distributed. In general, this may be explained in a similar way as for dense or complete graphs, since it can be assumed that a generated graph from normally distributed points in the plane is relatively dense in a center with few edges outside of the center. Note that the same effects can be observed by testing instances with a different number of vertices.

To conclude the second research question, we investigate the effect of special graph structures. Therefore we compare the results of the generated grid graphs with sparse graphs of the same density. Figure~\ref{runtime_grid_Fig}, \ref{timeouts_grid_Fig} (for the CTP) and Figure~\ref{runtime_grid_gctp_Fig}, \ref{timeouts_grid_gctp_Fig} (for the GCTP) show that the computational effort to solve instances of grid graphs is higher in comparison to instances with similar density but no specific structure. This effect was to be expected due to the highly structured construction of grid graphs as such. The highly symmetrie structure of grid graphs regarding the node degree and the local connection structure lead to a lot of possible paths from one vertex to another. Thus, there is even for small instances a high potential of conflicts between the two objectives as there is no edge that needs to be contained in a solution to guarantee a spanning tree.

\begin{figure}
	\begin{minipage}{0.48\textwidth}
		\includegraphics[width=1\textwidth]{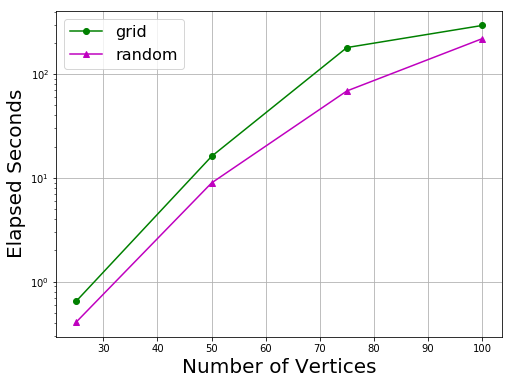}
		\caption{CTP Runtime, gridgraph}
		\label{runtime_grid_Fig}
	\end{minipage}
	\hfill
	\begin{minipage}{0.48\textwidth}
		\includegraphics[width=1\textwidth]{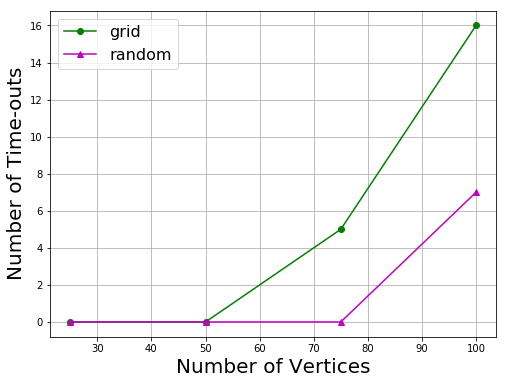}
		\caption{CTP Time-outs, gridgraphs}
		\label{timeouts_grid_Fig}
	\end{minipage}\\[0.5cm]
	\begin{minipage}{0.48\textwidth}
		\includegraphics[width=1\textwidth]{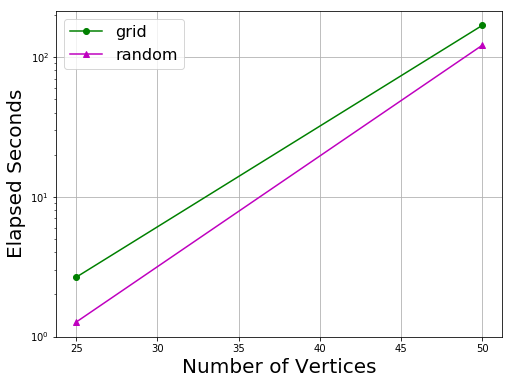}
		\caption{GCTP Runtime, gridgraph}
		\label{runtime_grid_gctp_Fig}
	\end{minipage}
	\hfill
	\begin{minipage}{0.48\textwidth}
		\includegraphics[width=1\textwidth]{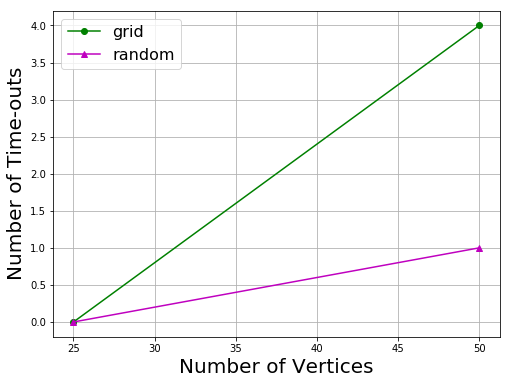}
		\caption{GCTP Time-outs, gridgraph}
		\label{timeouts_grid_gctp_Fig}
	\end{minipage}
\end{figure}

\begin{figure}
	\begin{minipage}{0.48\textwidth}
		\includegraphics[width=1\textwidth]{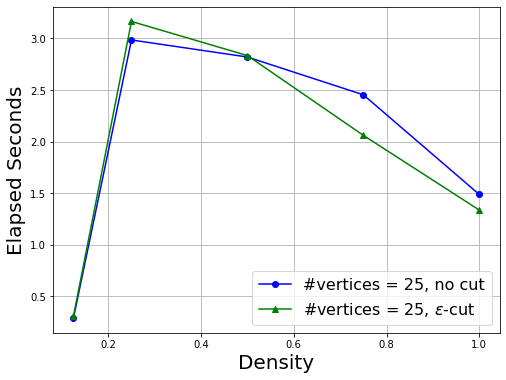}
		\caption{CTP Runtime, 25 vertices}
		\label{runtime_epscut25_Fig}
	\end{minipage}
	\hfill
	\begin{minipage}{0.48\textwidth}
		\includegraphics[width=1\textwidth]{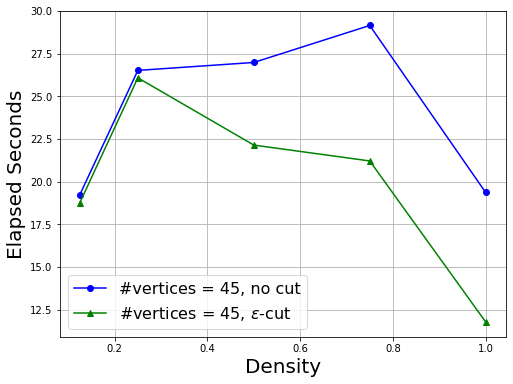}
		\caption{CTP Runtime, 45 vertices}
		\label{runtime_epscut45_Fig}
	\end{minipage}\\[0.5cm]
	\begin{minipage}{0.48\textwidth}
		\includegraphics[width=1\textwidth]{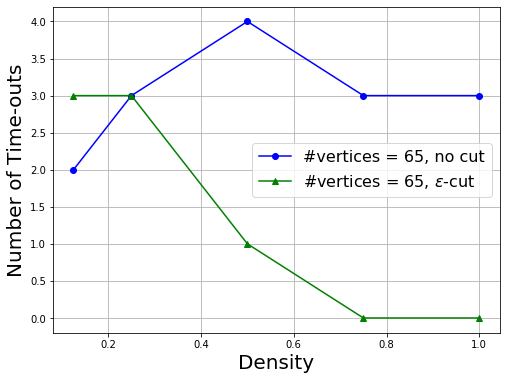}
		\caption{CTP Time-outs, 65 vertices}
		\label{timeouts_epscut65_Fig}
	\end{minipage}
	\hfill
	\begin{minipage}{0.48\textwidth}
		\includegraphics[width=1\textwidth]{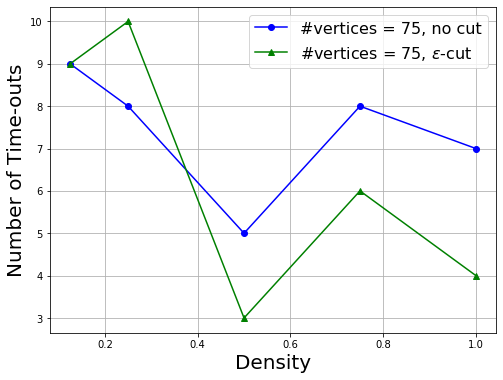}
		\caption{CTP Time-outs, 75 vertices}
		\label{timeouts_epscut75_Fig}
	\end{minipage}	
	
\end{figure}

Finally, we examine the third question regarding the effect of the introduced $\epsilon$-cut. We applied the cut to the test problem classes of \emph{incomplete} and \emph{complete} graphs. The runtime as well as the number of time-outs are compared to the test results, for which no cut was added. In Figure~\ref{runtime_epscut25_Fig} and \ref{runtime_epscut45_Fig}, we see that as the density of the graph instances increases, the runtime of the program that uses the $\epsilon$-cut is lower than the runtime of the standard $\epsilon$-constraint method. If we look at the results for instances that have exceeded the maximum time window, we observe the same effect, see Figure~\ref{timeouts_epscut65_Fig} and \ref{timeouts_epscut75_Fig}. With increasing density, the number of total time-outs decreases comparatively by adding the $\epsilon$-cut. Thus, the higher the density of the underlying graph for which the CTP has to be solved, the more effective the additional cut is. Due to the definition of the $\epsilon$-cut itself, this effect is plausible. The cut affects particularly the edges of the underlying graph instance, meaning edges are possibly cut off from the decision space. Since for dense graphs the paths in the tree are rather short w.r.t.\ the number of edges, there is less conflict between the shortest path and the spanning tree objective. 
However, the size of the graph in terms of the number of edges causes a substantial part of the runtime, see Table~\ref{s_ndpoints_Tab}, and thus, cutting of edges may have a significant impact on the runtime. Moreover, in dense graphs, there are potentially more edges to cut off. The effectiveness in relation to the density of the graph can be observed regardless of its number of nodes. However, there is no such effect regarding the GCTP, since for the GCTP the objectives are conflicting already due to the different cost coefficients of the objectives. In comparison to the CTP the conflict between the two objective functions is not decreasing as the density of the graphs is increasing. Therefore, the $\epsilon$-cut is not as effective as for the CTP. 

In the following, we outline the framework of our implementation and give a short description of the computational resources of the machine on which the tests are performed. 
The $\epsilon$-constraint method that we use to solve the (generalized) cable-trench problem is implemented in C++. Each single-objective mixed-integer $\epsilon$-constraint problem that has to be computed in one step of the algorithm is solved with CPLEX 20.1.0. All numerical tolerances correspond to the default values specified by CPLEX. To measure the performance of the implemented algorithm and cutting plane, the average number of non-dominated points, the average runtime in seconds, the average nodes explored during the branch-and-bound procedure and the number of time-outs are stored. For each graph instance, we set a time limit of 300 seconds to determine the complete non-dominated set. If a problem instance can not be solved within the predefined time window, the current number of determined non-dominated points is stored and the number of time-outs within the instance set is  increased by one. 

All tests are performed on a machine with a Intel\textregistered\ Core\texttrademark\ i7-8700 CPU 3.20\,GHz with 12\,MB cache and 32\,GiB RAM. The implementation and the used test instances can be found in \cite{loehken23bgctp}.

\section{Concluding Remarks}\label{sec:Conclusion}
In this article we show that the biobjective formulation of the classical cable-trench problem can lead to new insights on the structure and complexity of CTP. Moreover, with the biobjective formulation unrevealed (efficient) compromise solutions can be determined. 

The numerical test show that already moderate instance sizes pose a considerable challenge for the  optimization of the \(\epsilon\)-constraint scalarization using CPLEX. The numerical difficulty is due to the intractability of the biobjective optimization problem and the \NP-hardness of 
its \(\epsilon\)-constraint scalarization. Interestingly, these two effects can be observed seperately in the numerical results. Dense graphs tend to have a small number of nondominated solutions for which each \(\epsilon\)-constraint scalarization is hard to solve due to the large number of variables. Sparse graphs however, have rather small numbers of variables, but due to the longer paths more conflicts between the objectives which increases the number of nondominated points. 

Even though CTP and the generalized CTP (GCTP) look very similar, the objective functions in their biobjective formulations are conflicting due to different reasons. While the objective coefficients in GCTP are different and can in general be uncorrelated (or negatively correlated) which yields conflicting objective, in CTP the objective function coefficients are the same in both objectives. 
The objective functions thus only only differ in the way the edge costs are aggregated. While the spanning tree objective accounts for every edge in the tree, the shortest path objective counts the cost of an edge in the tree as many times as this edge is on a shortest path, which leads to a conflict of objective in particular if the number of edges in the shortest paths increases.

\paragraph{Acknowledgments}
The authors thank Kathrin Klamroth for the discussions which helped to elaborated the intractability proof. Futhermore, the authors thankfully acknowledge financial support by Deutsche For\-schungs\-gemeinschaft, project number KL~1076/11-1.

\bibliographystyle{abbrvnat}
\bibliography{literatur.bib}

\end{document}